\theoremstyle{plain}
\theoremstyle{plain}
\newtheorem{theorem}{Theorem}[section]
\newtheorem{proposition}[theorem]{Proposition}
\theoremstyle{definition}
\newtheorem{definition}[theorem]{Definition}
\theoremstyle{remark}
\newtheorem{remark}[theorem]{Remark}
\theoremstyle{claim}
\newtheorem{claim}[theorem]{Claim}
\theoremstyle{definition}
\newtheorem{notation}[theorem]{Notation}
\theoremstyle{definition}
\theoremstyle{remark}
\mathchardef\emptyset="001F
\numberwithin{equation}{section}
\def\Ddots{\mathinner{\mkern1mu\raise\p@
\vbox{\kern7\p@\hbox{.}}\mkern2mu
\raise4\p@\hbox{.}\mkern2mu\raise7\p@\hbox{.}\mkern1mu}}
\title[]
{Optimality of broken extremals.}
\author[A. A. Agrachev]{Andrei A. Agrachev}
\address[A. A. Agrachev]{SISSA, 34136 Trieste,  Italy; Steklov Mathematical Institute, 119991 Moscow, Russia}
\email[A. Agrachev]{agrachev@sissa.it}
\author[C. Biolo]{Carolina Biolo}
\address[Carolina Biolo]{SISSA, Via Bonomea 265, 34136 Trieste, Italy}
\email[Carolina Biolo]{cbiolo@sissa.it}
\date{}
\begin{document}

\begin{abstract}
In this paper we analyse the optimality of broken Pontryagin extremal for an $n$-dimensional affine control system with a control parameter, taking values in a $k$-dimensional closed ball. We prove the optimality of broken normal extremals when $n=3$ and the controllable vector fields form a contact distribution, and when the Lie algebra of the controllable fields is locally orthogonal to the singular locus and the drift does not belong to it. Moreover, if $k=2$, we show the optimality of any broken extremal even abnormal when the controllable fields do not form a contact distribution in the point of singularity.
\end{abstract}
\maketitle
\tableofcontents
\section{Introduction}
This paper is closely related to \cite{AB2} and \cite{AB}, where the authors study the local regularity of time-optimal controls and trajectories for the control system of the form:
\begin{equation}
\label{1,1}
\dot q=f_0(q)+\sum_{i=1}^ku_if_i(q),\quad q\in M,\ (u_1,\ldots,u_k)\in U,
\end{equation}
where $M$ is a smooth $n$-dimensional manifold, $U=\{u\in \mathbb{R}^k : ||u||\leq 1\}$ is the $k$-dimensional ball, and $f_0,\,f_1,\,\ldots,\,f_k$ are smooth\footnote{We work in $\mathcal{C}^\infty (M)$ category.} vector fields. We also assume that $f_1(q),\ldots,f_k(q)$ are linearly independent in the domain under consideration.

If $k=n$, then all extremals are smooth; otherwise they may be nonsmooth and there exists a vast literature dedicated to the case $k=1$. Some references can be found in paper \cite{AB}.

At \cite{AB} and \cite{AB2} the authors prove that with some generic conditions it is possible to avoid chattering phenomenon if $k<n$ and that the singularity must be isolated, moreover we denoted in which cases it is possible to find non smooth optimal trajectories. 

Actually, in that paper we did not claim that they exist. Indeed, via the Pontryagin maximum principle, we know that every time-optimal trajectory has a lift, called extremal, in $T^*M$. But, on the other hand it is not guaranteed that given any extremal its projection on $M$ is time-optimal: even though we have found extremals through the singular locus $\Lambda$ that projects in piece-wise smooth trajectories, non necessarily those trajectories are time-optimal.

The optimality of the projection of any extremal is guaranteed only if we consider a linear control system, satisfying Kalman's Criterion:
$$
\mathrm{rank}\{B,AB,\hdots,A^{n-1}B \}=n,
$$
and put the final point in a equilibrium. It is true due to the fact that the uniqueness of the time-optimal solution and the uniqueness of the extremal hold.\\

In this paper we are going to discuss the optimality of the projections of the non smooth extremals detected in \cite{AB} and \cite{AB2}, called \emph{broken extremals}, given a non linear affine control system (\ref{1,1}).\\

Let us briefly recall the conditions that we need in a neighbourhood $O_{\bar{\lambda}}$ of $\bar\lambda\in\Lambda\subseteq T^* M$ in order to have and study broken extremals.
\begin{notation} We denote $h_i(\lambda):=\left\langle \lambda,f_i(q) \right\rangle $ $h_{ij}(\lambda):=\left\langle \lambda,[f_{i},f_j](q) \right\rangle $, $\lambda\in T_q M$ and $i,j\in\{0,\hdots,k\}$.\\
Moreover, given $\bar{\lambda}\in \Lambda$, we call $h_{ij}=h_{ij}(\bar{\lambda})$, then $H_{0I}(\lambda)=(h_{0i})_{i=1,\hdots,k}$ and $H_{IJ}(\lambda)=(h_{ij})_{i,j=1,\hdots,k}$
\end{notation}
Given a $n$-dimensional manifold $M$, let us consider the system (\ref{1,1}). From the Pontryagin maximum principle, out of the singular locus $\Lambda=\{\lambda\in T^*M | h_1(\lambda)=\hdots=h_k(\lambda)=0\}$, extremals satisfy the Hamiltonian system defined by
$$H(\lambda)=h_0(\lambda)+\sqrt{h^2_1(\lambda)+\hdots+h^2_k(\lambda).}
$$
In \cite{AB2} we proved that if at $\bar\lambda\in\Lambda$ it is satisfied the condition
\begin{equation}
\label{p265}H_{0I}\notin H_{IJ}\overline{B}^k
\end{equation}
there exist a unique extremal that passes through $\bar\lambda$, moreover in its neighbourhood $O_{\bar\lambda}$ the continuous flow of extremals is defined. This flow is not locally Lipschitz in general.\\

Denoting $\bar{q}=\pi(\bar{\lambda})$, the projection of $\bar\lambda$ in $M$, and $\mathcal{F}=\{f_1,\hdots,f_k\}$, we prove the sufficient optimality of the normal broken extremal, passing through $\bar{\lambda}\in\Lambda$, if
$$\bar{\lambda}\perp\mathrm{Lie}_{\bar{q}}\mathcal{F},\quad h_0(\bar{\lambda})>0$$
and either $\mathrm{rank}\left\lbrace \mathrm{Lie}_{\bar{q}}\mathcal{F} \right\rbrace = n-1$, or $\mathrm{rank}\left\lbrace \mathrm{Lie}_{q}\mathcal{F} \right\rbrace=\mathrm{rank}\left\lbrace \mathrm{Lie}_{\bar{q}}\mathcal{F} \right\rbrace< n-1$ for all $q$ from a neighbourhood of $\bar{q}$ in $M$ (see Theorem \ref{teosuf1}). Moreover, if $n=3$ $k=2$ we prove the optimality for a normal broken extremal if $f_1,f_2$ form a contact distribution in a neighbourhood of $\bar{q}$ (see Theorem \ref{teosuf2}).\\
 We use a method described by Agrachev and Sachkov in their book \cite{A}. It is a geometrical elaboration of the classical fields of extremals theory, it proves optimality only for normal extremals, assuming the Hamiltonian smooth. We extended this method in the Lipschitzian submanifold, with constructions \emph{ad hoc}.\\
 
 We also prove optimality of normal (or abnormal) broken extremals for $n>2$ $k=2$ and 
\begin{equation}
\label{77t}
\bar\lambda\perp\mathrm{span}\{f_1(\bar{q}),f_2(\bar{q}),[f_1,f_2](\bar{q})\}
\end{equation}
in just that point (see Theorem \ref{teosuf3}). This result is given by direct estimates with time-rescaling.\\

In the thesis \cite{BC}, we present the computations of this method with direct estimates in the general (possible abnormal) case, if (\ref{77t}) does not hold. It may be useful to answer further questions. 

\section{Preliminaries} \label{sec:preliminaries.section}

In this section we recall some basic definitions in Geometric Control Theory. For a more detailed introduction, see \cite{A}.
\begin{definition}
Given a $n$-dimensional manifold $M$, we call $\mathrm{Vec}(M)$ the \emph{set of smooth vector fields} on $M$: $f\in \mathrm{Vec}(M)$ if and only if $f$ is a smooth map with respect to $q\in M$ taking value in the tangent bundle, $$f:M\longrightarrow TM,
$$ such that if $q\in M$ then $f(q)\in T_q M$.\\
Each vector field defines a \emph{dynamical system} $$\dot{q}=f(q),$$
i. e. for each initial point $q_0\in M$ it admits a solution $q(t,q_0)$ on an opportune time interval $I$, such that $q(0,q_0)=q_0$ and
$$\frac{d}{dt}q(t)=f(q(t)),\quad \mathrm{a.}\,\mathrm{e}. \,t\in I .
$$
\end{definition}

\begin{definition}
$f\in \mathrm{Vec}(M)$ is a \emph{complete vector field} if , for each initial point $q_0\in M$, the solution $q(t,q_0)$ of the \emph{dynamical system} $\dot{q}=f(q)$ is defined for every $t\in \mathbb{R}$. If $f\in \mathrm{Vec}(M)$ has a compact support, it is a complete vector field.
\end{definition}

In our local study, we may assume without lack of generality that all vector fields under consideration are complete.

\begin{definition}
A \emph{control system} in $M$ is a family of dynamical systems
$$\dot{q}=f_u(q), \quad \mathrm{with}\,\,q\in M,\, \{f_u\}_{u\in U}\subseteq \mathrm{Vec}(M),
$$
parametrized by $u\in U\subseteq\mathbb{R}^k$, called \emph{space of control parameters}.\\
Instead of constant values $u\in U$, we are going to consider $L^\infty$ time depending functions taking values in $U$. Thus, we call $\mathcal{U}=\{u:I\rightarrow U,\,u\in L^\infty\}$ the \emph{set of admissible controls} and study the following control system
\begin{equation}
\label{control.system}
\dot{q}=f_u(q), \quad \mathrm{with}\,\,q\in M,\, u\in \mathcal{U}.
\end{equation}
\end{definition}
With the following theorem we want to show that, choosing an admissible control, it is guaranteed the locally existence and uniqueness of the solution of a control system for every initial point.
\begin{theorem}
Fixed an admissible control $u\in\mathcal{U}$, (\ref{control.system}) is a non-autonomous ordinary differential equation, where the right-hand side is smooth with respect to $q$, and measurable essentially bounded with respect to $t$, then, for each $q_0\in M$, there exists a local unique solution $q_u(t,q_0)$ such that $q_u(0,q_0)=q_0$ and it is Lipschitzian with respect to $t$.
\end{theorem}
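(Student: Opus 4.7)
The plan is to reduce to a classical Carathéodory-type existence and uniqueness statement in a local chart, since everything in the theorem is local in $t$ and $q$.

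First I would pick a chart around $q_0$ so that $M$ is replaced by an open subset of $\mathbb{R}^n$ and the control system becomes $\dot q = F(t,q)$ with $F(t,q):=f_{u(t)}(q)$. By the hypothesis on $\mathcal{U}$, the control $u:I\to U$ is essentially bounded; since $U\subseteq\mathbb{R}^k$ is compact (the unit ball) and the map $(u,q)\mapsto f_u(q)$ is smooth, the function $F$ is measurable in $t$, smooth in $q$, and both $F(t,q)$ and its $q$-derivative are uniformly bounded on any compact cylinder $\{|t|\le T\}\times\overline{B}_r(q_0)$. In particular $F$ satisfies the Carathéodory conditions, together with a uniform-in-$t$ Lipschitz bound $|F(t,q)-F(t,q')|\le L|q-q'|$ on this cylinder.

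Next I would pass to the equivalent integral equation
\begin{equation*}
q(t)=q_0+\int_0^t F(s,q(s))\,ds
\end{equation*}
and carry out Picard iteration on the Banach space $C([-\tau,\tau];\overline{B}_r(q_0))$ for $\tau>0$ chosen small enough that both the range condition ($|q(t)-q_0|\le r$, which holds once $\tau\cdot\sup|F|\le r$) and the contraction condition ($L\tau<1$) are satisfied. The integrals make sense since $s\mapsto F(s,q(s))$ is the composition of a continuous function with a measurable-in-$t$ map, hence measurable and bounded, so Lebesgue-integrable. The Banach fixed point theorem then yields a unique continuous solution; uniqueness on the whole common interval follows from a Grönwall argument based on the Lipschitz estimate.

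Finally, Lipschitz regularity in $t$ is immediate: for $t_1<t_2$ in the existence interval,
\begin{equation*}
|q_u(t_2,q_0)-q_u(t_1,q_0)|\le\int_{t_1}^{t_2}|F(s,q_u(s,q_0))|\,ds\le C(t_2-t_1),
\end{equation*}
with $C=\sup_{|t|\le\tau,\ |q-q_0|\le r}|f_{u(t)}(q)|<\infty$ thanks to the essential boundedness of $u$ and continuity of $f_u$ in $(u,q)$. The main (only) technical point is handling the merely measurable time dependence, which is exactly what the Carathéodory framework is designed for; once this is set up, the rest is routine fixed-point theory and I would not expect any essential obstacle.
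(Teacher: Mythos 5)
Your argument is correct and is exactly the classical Carath\'eodory/Picard scheme: local chart, verification of the Carath\'eodory conditions with a uniform-in-$t$ Lipschitz bound, contraction on the integral equation, Gr\"onwall for uniqueness, and the Lipschitz-in-$t$ estimate from boundedness of the right-hand side. The paper does not actually prove this statement --- it is quoted as a standard preliminary fact (implicitly from \cite{A}) --- so there is nothing to compare beyond noting that your proof is the standard one it relies on; the only point worth making explicit is that the boundedness and measurability of $t\mapsto f_{u(t)}(q)$ come from $u\in L^\infty$ with values in the compact ball $U$ together with joint continuity of $(u,q)\mapsto f_u(q)$, which holds in particular for the affine systems (\ref{affine.control.system}) the paper studies.
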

\begin{definition}
\label{007}
We denote
$$A_{q_0}=\{q_u(t,q_0) : t\geq 0,u\in\mathcal{U} \}
$$
the \emph{attainable set} from $q_0$. \\
We will write $q_u(t)=q_u(t,q_0)$ if we do not need to stress  that the initial position is $q_0$.
\end{definition}
\begin{definition}
An\emph{ affine control system} is a control system of the following form
\begin{equation}
\label{affine.control.system}
\dot{q}=f_0(q)+\sum^k_{i=1}u_if_i(q), \quad q\in M
\end{equation}
where $f_0,\ldots, f_k$ $\in \mathrm{Vec}(M)$ and $(u_1,\ldots, u_k)\in \mathcal{U}$, taking values in the set $U\subseteq \mathbb{R}^k$. \\
The uncontrollable term $f_0$ is called \emph{drift}. \\
\end{definition}
\subsection{Time-optimal problem}
\begin{definition}
Given the control system (\ref{control.system}), $q_0\in M$ and $q_1\in A_{q_0}$, the \emph{time-optimal problem} consists in minimizing the time of motion from $q_0$ to $q_1$ via admissible trajectories:
\begin{equation}
\label{time-optimal.problem}\left\lbrace \begin{array}{ll}
\dot{q}=f_u(q)&u\in \mathcal{U}\\
q_u(0,q_0)=q_0&\\
q_u(t_1,q_0)=q_1&\\
t_1 \rightarrow \min&
\end{array} \right.
\end{equation}
We call these minimizer trajectories \emph{time-optimal trajectories}, and \emph{time-optimal controls} the corresponding controls.
\end{definition}
\subsubsection{Existence of time-optimal trajectories}
Classical Filippov's Theorem (See \cite{A}) guarantees the existence of a time-optimal control for the affine control system if $U$ is a convex compact and $q_0$ is sufficiently close to $q_1$.
\subsection{First and second order necessary optimality condition}
Now we are going to introduce basic notions about Lie brackets, Hamiltonian systems and Poisson brackets, so that we present the first and second order necessary conditions of optimality: Pontryagin Maximum Principle, and Goh condition.
\begin{definition}
Let $f,g\in \mathrm{Vec}(M)$, we define their \emph{Lie brackets} the following vector field
$$[f,g](q)=\frac{1}{2}\left.\frac{\partial^2}{\partial t^2}\right|_{t=0}e^{-t g}\circ e^{-t f}\circ e^{t g}\circ e^{t f}(q), \quad  \forall q\in M.
$$
where $e^{-t f}$ is the flow defined by $-f$.\\
\begin{center}
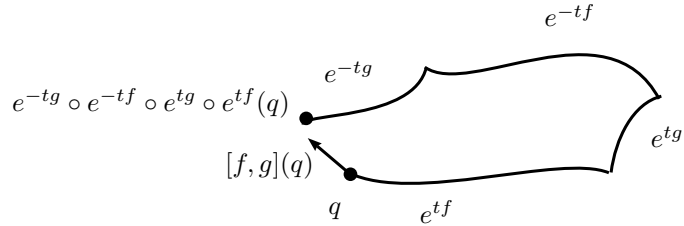

\psscalebox{1.0 1.0} 
{
\begin{pspicture}(0,-1.4669921)(6.62,1.4669921)
\psbezier[linecolor=black, linewidth=0.04](1.66,-0.73105466)(2.52,-1.1510547)(4.44,-0.47105467)(5.08,-0.7310546875)
\psbezier[linecolor=black, linewidth=0.04](5.1096,-0.72670466)(5.165974,-0.28143632)(5.3937507,0.19085746)(5.7704,0.2845953035607818)
\psbezier[linecolor=black, linewidth=0.04](5.74,0.24894531)(5.06,1.5289453)(3.26,0.24894531)(2.66,0.6689453125)
\psbezier[linecolor=black, linewidth=0.04](2.68,0.6689453)(2.5118368,-0.043554686)(1.0338775,0.071445316)(1.06,-0.0910546875)
\psline[linecolor=black, linewidth=0.04, arrowsize=0.05291667cm 2.0,arrowlength=1.4,arrowinset=0.12]{->}(1.68,-0.7510547)(1.12,-0.2710547)
\psdots[linecolor=black, dotsize=0.18](1.68,-0.7510547)
\psdots[linecolor=black, dotsize=0.18](1.1,-0.01)
\rput[bl](2.56,-1.3910546){$e^{tf}$}
\rput[bl](5.6,-0.3910547){$e^{tg}$}
\rput[bl](4.2,1.2089453){$e^{-tf}$}
\rput[bl](1.3,0.46894532){$e^{-tg}$}
\rput[bl](1.36,-1.3510547){$q$}
\rput[bl](0.0,-0.8110547){$[f,g](q)$}
\rput[bl](-2.8,0.0){$e^{-t g}\circ e^{-t f}\circ e^{t g}\circ e^{t f}(q)$}
\end{pspicture}
}
\captionof{figure}{Lie Bracket}
\end{center}
\end{definition}
\begin{definition}
A \emph{Hamiltonian} is a smooth function on the cotangent bundle
$$h\in C^\infty(T^*M).
$$
The \emph{Hamiltonian vector field} is the vector field associated with $h$ via the canonical symplectic form $\sigma$
$$\sigma_\lambda (\cdot ,\overrightarrow{h})=d_\lambda h.
$$
We denote
$$\dot{\lambda}=\overrightarrow{h}(\lambda), \quad  \lambda \in T^*M,
$$
the \emph{Hamiltonian system}, which corresponds to $h$.\\
Let $(x_1,\ldots,x_n)$ be local coordinates in $M$ and $(\xi_1,\ldots,\xi_n,x_1,\ldots,x_n)$ induced coordinates in $T^*M,\ \lambda=\sum_{i=1}^n\xi_idx_i$. The \emph{symplectic form} has expression $\sigma=\sum^n_{i=1}d\xi_i\wedge dx_i$. Thus, in canonical coordinates, the Hamiltonian vector field has the following form
$$\overrightarrow{h}=\sum^n_{i=1}\left( \frac{\partial h}{\partial \xi_i}\frac{\partial}{\partial x_i} -\frac{\partial h}{\partial x_i}\frac{\partial}{\partial \xi_i}\right).
$$
Therefore, in canonical coordinates, it is
$$\left\lbrace
\begin{array}{l}
\dot{x}_i=\frac{\partial h}{\partial \xi_i}\\
\dot{\xi_i}=-\frac{\partial h}{\partial x_i}
\end{array}
\right.
$$
for $i=1,\ldots,n$.
\end{definition}
\begin{definition}
The \emph{Poisson brackets} $\{a,b\}\in \mathcal{C}^\infty(T^*M)$ of two Hamiltonians $a,b\in \mathcal{C}^\infty(T^*M)$ are defined as follows: $\{a,b\}=\sigma(\vec a,\vec b)$; the coordinate expression is:
$$\{a,b\}=\sum_{k=1}^n\left( \frac{\partial a}{\partial \xi_k}\frac{\partial b}{\partial x_k}-\frac{\partial a}{\partial x_k}\frac{\partial b}{\partial \xi_k}\right).
$$
\end{definition}
\begin{remark}
\label{poisson,lie}
Let us recall that, given $g_1$ and $g_2$ vector fields in $M$, considering the Hamiltonians $a_1(\xi,x)=\left\langle \xi, g_1(x)\right\rangle $ and $a_2(\xi,x)=\left\langle \xi, g_2(x)\right\rangle $, it holds
$$\{a_1,a_2\}(\xi,x)=\left\langle \xi, [g_1,g_2](x)\right\rangle.
$$
\end{remark}
\begin{remark}
\label{derivPoiss}
Given a smooth function $\Phi$ in $\mathcal{C}^\infty(T^*M)$, and $\lambda(t)$ solution of the Hamiltonian system $\dot{\lambda}=\overrightarrow{h}(\lambda)$, the derivative of $\Phi(\lambda(t))$ with respect to $t$ is the following
$$\frac{d}{dt}\Phi(\lambda(t))=\{h,\Phi\}(\lambda(t)).
$$
\end{remark}
\subsubsection{Pontryagin Maximum Principle}
\begin{theorem}[Pontryagin Maximum Principle - time-optimal problem]
Let an admissible control $\tilde{u}$, defined in the interval $t\in [0,\tau_1 ]$, be time-optimal for the system (\ref{control.system}), and let the Hamiltonian associated with this control system be the action on $f_u(q)\in T^*_q M$ of a covector $\lambda\in T^*_q M$: $$\mathcal{H}_u(\lambda)=\left\langle \lambda,f_u(q)\right\rangle . $$
Then there exists $\lambda(t)\in T_{q_{\tilde{u}}(t)}^*M$, for $t\in [0,\tau_1 ]$, called \emph{extremal} never null and Lipschitzian, such that for almost all $t\in [0,\tau_1 ]$ the following conditions hold:
\begin{enumerate}
	\item $\dot{\lambda}(t)=\vec{\mathcal{H}}_{\tilde{u}}( \lambda(t))$
	\item $\mathcal{H}_{\tilde{u}}(\lambda(t))= \max_{u\in U} \mathcal{H}_u(\lambda(t))$ (Maximality condition)
\item $\mathcal{H}_{\tilde{u}}(\lambda(t))\geq0$.
\end{enumerate}
Given the canonical projection $\pi:TM\rightarrow M$, we denote $q(t)=\pi(\lambda(t))$ the \emph{extremal trajectory}.
\end{theorem}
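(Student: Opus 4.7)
The plan is to follow the classical needle-variation argument. I would reformulate time-optimality as a boundary condition on an extended attainable set, build a convex cone of first-order admissible endpoint perturbations by needle variations of $\tilde u$, and then extract $\lambda$ from a separating hyperplane transported backwards by the adjoint of the linearized flow.

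First, pass to the extended manifold $\tilde M=M\times\mathbb{R}$ with state $(q,s)$ and dynamics $(\dot q,\dot s)=(f_u(q),1)$; time-optimality of $\tilde u$ forces $(q_1,\tau_1)\in\partial\tilde A_{(q_0,0)}$, since otherwise one could reach $q_1$ in strictly smaller time. For each Lebesgue point $\tau\in(0,\tau_1)$ of $t\mapsto f_{\tilde u(t)}(q_{\tilde u}(t))$, each $v\in U$ and small $\varepsilon>0$, consider the needle variation $u^{\tau,v,\varepsilon}$ equal to $v$ on $[\tau-\varepsilon,\tau]$ and to $\tilde u$ elsewhere; a direct estimate gives
\[
q_{u^{\tau,v,\varepsilon}}(\tau_1)=q_1+\varepsilon\,\Phi_{\tau_1,\tau}\bigl(f_v(q_{\tilde u}(\tau))-f_{\tilde u(\tau)}(q_{\tilde u}(\tau))\bigr)+o(\varepsilon),
\]
where $\Phi_{\tau_1,\tau}$ is the differential between times $\tau$ and $\tau_1$ of the flow of $\tilde u$. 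Concatenating finitely many such needles at distinct Lebesgue points, together with a shortening of the final time by $\delta>0$, produces a convex cone $\tilde K\subset T_{(q_1,\tau_1)}\tilde M$ of first-order attainable directions.

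Since $(q_1,\tau_1)\in\partial\tilde A_{(q_0,0)}$, the cone $\tilde K$ cannot contain the direction $-\partial_s$ in its interior; Hahn--Banach separation then yields a nonzero covector $(\lambda_1,-\nu)$ with $\nu\ge 0$ that is nonpositive on $\tilde K$. Transporting $\lambda_1$ backwards by the adjoint of the linearization -- equivalently, solving $\dot\lambda=\vec{\mathcal H}_{\tilde u(t)}(\lambda)$ with $\lambda(\tau_1)=\lambda_1$ -- yields (1). Evaluating the separating inequality on a single needle gives
\[
\langle\lambda(\tau),\,f_v(q(\tau))-f_{\tilde u(\tau)}(q(\tau))\rangle\le 0\quad\text{for a.e. }\tau,\ \forall v\in U,
\]
which is condition (2). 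The component $-\nu$ applied to the final-time shortening yields $\mathcal H_{\tilde u(\tau_1)}(\lambda(\tau_1))=\nu\ge 0$, and an envelope-theorem argument (using the maximality of $\tilde u$) shows that $t\mapsto\mathcal H_{\tilde u(t)}(\lambda(t))$ is almost-everywhere constant, propagating the inequality to all $t$ and giving (3). Nowhere-vanishing of $\lambda$ then follows from linearity of the adjoint equation, since $\lambda(t)\equiv 0$ would force $(\lambda_1,\nu)=0$, contradicting nontriviality of the separating functional.

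The main obstacle is the multi-needle construction in the measurable setting: one must verify that the set of first-order perturbations is genuinely a \emph{convex} cone with uniform $o(\varepsilon)$ remainders, so that a true separation argument is available at $(q_1,\tau_1)$. This rests on Lebesgue-point analysis of $L^\infty$-valued maps and on continuous dependence of the chronological flow on its control parameter; once these analytic technicalities are in place, the geometric step of separating and back-transporting the covector is routine.
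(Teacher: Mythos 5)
The paper does not prove this theorem: it is recalled in the Preliminaries as the classical Pontryagin Maximum Principle (proofs are in the cited references \cite{A}, \cite{PBGM}), so there is no in-paper argument to compare yours against. Your sketch follows exactly the standard route of those references: extended state $(q,s)$ with $\dot s=1$, needle variations at Lebesgue points, a convex approximating cone of endpoint perturbations, a separating covector, and backward transport by the adjoint of the linearized flow. That is the right strategy, the first-order expansion you quote for a single needle is correct, and your nontriviality argument (via $\nu=\mathcal H_{\tilde u(\tau_1)}(\lambda(\tau_1))$) is sound.

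As a proof, however, it stops short at the two places where the real work lies. First, to get a separating hyperplane from $(q_1,\tau_1)\in\partial\tilde A_{(q_0,0)}$ you must show that if $-\partial_s$ lay in the interior of the cone $\tilde K$, then points of the form ``same endpoint, strictly smaller time'' would actually be attainable, contradicting optimality. The multi-needle endpoint map is only continuous (not $C^1$) in the needle widths, so this does not follow from the $o(\varepsilon)$ expansions together with an inverse-function argument; the classical proofs insert a genuinely topological step here (Brouwer's fixed point theorem, or a degree argument) to pass from first-order interior directions to actually attained points. Your sketch flags the uniform-remainder issue but not this step, which is the heart of the matter. Second, condition (3) and the almost-everywhere constancy of $t\mapsto\mathcal H_{\tilde u(t)}(\lambda(t))$ are not an ``envelope-theorem'' one-liner: one needs the standard argument combining the maximality condition at Lebesgue points with the absolute continuity of this function along the extremal. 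With those two ingredients supplied your outline becomes the standard complete proof; as written it is a correct roadmap rather than a proof.
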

\subsubsection{Goh condition}
\label{subsecgoh}
Finally, we present the Goh condition, on the singular arcs of the extremal trajectory, in which we do not have information from the maximality condition of the Pontryagin Maxinum Principle. We state the Goh condition only for affine control systems (\ref{affine.control.system}).
\begin{theorem}[\emph{Goh condition}]
\label{Goh.condition}
Let $\tilde q(t),\ t\in[0,t_1]$ be a time-optimal trajectory corresponding to a control $\tilde u$. If $\tilde u(t)\in\mathrm{int}U$ for any $t\in(\tau_1,\tau_2)$,
then there exist an extremal $\lambda(t)\in T_{q(t)}^*M$ such that
\begin{equation}
\label{cond.di.Goh}
\left\langle \lambda(t),[f_i,f_j](q(t))\right\rangle =0,\quad\ t\in(\tau_1,\tau_2),\ i,j=1,\ldots,m.
\end{equation}
\end{theorem}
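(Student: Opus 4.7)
The plan is to combine the Pontryagin Maximum Principle with a second-order variational argument, since Goh's condition is genuinely a second-order necessary condition that cannot be obtained by pointwise differentiation of first-order information alone. First I apply PMP to the time-optimal pair $(\tilde q,\tilde u)$, which furnishes an extremal $\lambda(t)$ satisfying the Hamiltonian system and the maximality condition. On the sub-interval $(\tau_1,\tau_2)$ where $\tilde u(t)\in\mathrm{int}\,U$, maximality forces the gradient in $u$ of $\mathcal{H}_u(\lambda(t))=h_0(\lambda(t))+\sum_i u_i h_i(\lambda(t))$ to vanish, yielding
\[
h_i(\lambda(t))=\langle \lambda(t),f_i(\tilde q(t))\rangle=0,\qquad i=1,\dots,k,\ t\in(\tau_1,\tau_2).
\]
Differentiating this identity along the Hamiltonian flow (using Remark~\ref{derivPoiss} and Remark~\ref{poisson,lie}) gives $\{h_0,h_i\}(\lambda)+\sum_j \tilde u_j\{h_j,h_i\}(\lambda)=0$, which is an algebraic relation among the $h_{ij}$ but not yet their vanishing; the extra information must come from variations.

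To extract the second-order condition, I construct two-parameter needle variations of $\tilde u$ supported near two nearby times $s_1<s_2$ in $(\tau_1,\tau_2)$. Specifically, on short subintervals of length $\epsilon$ I perturb $\tilde u$ by $\pm e_i$ around $s_1$ and by $\pm e_j$ around $s_2$; since $\tilde u$ is interior these perturbed controls remain admissible. Using the Agrachev–Sachkov chronological calculus from \cite{A}, the endpoint of the perturbed trajectory at time $t_1$ admits an asymptotic expansion whose first-order term vanishes precisely because $h_i,h_j\equiv 0$ along $\lambda$, and whose leading nontrivial term is of order $\epsilon^2$ and is represented by the vector field $[f_i,f_j]$ at $\tilde q$. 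Pairing this second-order displacement with the terminal covector $\lambda(t_1)$ produces the scalar $h_{ij}(\lambda(t_1))=\langle \lambda(t_1),[f_i,f_j](\tilde q(t_1))\rangle$ up to a positive factor.

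The optimality of $(\tilde q,\tilde u)$ implies that this signed second-order variation cannot point into the interior of the descent cone of the time functional; but the sign of each needle and the role of the indices $i,j$ can be freely reversed, so both $h_{ij}$ and $-h_{ij}$ are admissible leading terms. This forces $h_{ij}(\lambda(t))=0$ for all $t\in(\tau_1,\tau_2)$ and all pairs $i,j$, which is exactly the Goh relation \eqref{cond.di.Goh}. The main obstacle is making the second-order expansion rigorous and intrinsic: one must propagate the needle perturbations through the reference flow, keep track of the commutators that arise at order $\epsilon^2$, and then pair the resulting covariant quantity with $\lambda$ in a coordinate-free way. Since the chronological calculus machinery of \cite{A} is designed precisely for this computation, in the write-up I would invoke that framework rather than reprove the expansion from scratch, and only verify that the interior condition $\tilde u\in\mathrm{int}\,U$ allows the two-sided perturbation needed to upgrade the inequality into an equality.
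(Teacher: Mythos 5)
The paper does not prove Theorem~\ref{Goh.condition} at all: it is quoted in the preliminaries as a classical second-order necessary condition, with the proof delegated to the literature (it is the Goh condition of \cite{A}). So the comparison can only be between your sketch and the standard argument, and there your proposal has a genuine gap. The first paragraph is fine ($\tilde u\in\mathrm{int}\,U$ plus maximality gives $h_i(\lambda(t))\equiv 0$, and differentiation via Remarks~\ref{derivPoiss} and~\ref{poisson,lie} gives only an algebraic identity), but the second-order step does not close as written. Your two needle variations do not fix the endpoint: since $h_i(\lambda)=0$ only says $\lambda$ annihilates $f_i$, the first-order displacement of the endpoint is a nonzero vector, and you cannot simply ``pair the second-order displacement with $\lambda(t_1)$'' and invoke optimality; one must restrict to the kernel of the differential of the endpoint map (or equivalently use an open-mapping/index argument for boundary points of the attainable set) before any sign condition on second-order terms is available. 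Moreover, the expansion at order $\epsilon^2$ contains, besides the cross term proportional to $\langle\lambda,[f_i,f_j]\rangle\,ab$, symmetric terms in $a^2$ and $b^2$ involving brackets with the drift, which do not change sign when you flip a needle; so the statement ``both $h_{ij}$ and $-h_{ij}$ are admissible leading terms, hence $h_{ij}=0$'' is not justified. The standard proof handles exactly this point by a scaling argument: in the Hessian of the endpoint map the antisymmetric (Goh) term involves one time-derivative fewer than the symmetric (Legendre) terms, so rapidly oscillating variations (or needles whose separation shrinks faster than their amplitude) make the antisymmetric part dominant, and nonnegativity of the second variation on the kernel then forces $\langle\lambda(t),[f_i,f_j](q(t))\rangle=0$.

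To repair your write-up you would have to import precisely this machinery from \cite{A} (second variation of the endpoint map along a singular/interior arc, restriction to the kernel of the first differential, and the oscillation/scaling argument), at which point the proof becomes the textbook one rather than an independent route; alternatively, you can do what the paper does and cite the result, since nothing in the paper depends on reproving it.
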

\subsection{Broken extremals}
Let us define the broken extremals presenting some facts from paper \cite{AB2}.

We consider $n$-dimensional affine control system with a $k$-dimensional control:
\begin{equation}
\label{246}
\dot{q}=f_0(q)+\sum^k_{i=1}u_if_i(q),\quad q\in M, u\in\mathcal{U}
\end{equation}
where the space of control parameters is the $k$-dimensional closed unitary ball: $U=\{u\in\mathbb{R}^k : ||u||\leq 1\}$.\\
By the Pontryagin Maximum Principle, every time-optimal trajectory of our system has an extremal in the cotangent bundle $T^*M$ that satisfies a Hamiltonian system, given by the maximized Hamiltonian $H$, denoted by the maximality condition.
\begin{notation} Let us call $h_i(\lambda)=\left\langle \lambda,f_i(q) \right\rangle $, $f_{ij}(q)=[f_i,f_j](q)$, and $h_{ij}(\lambda)=\left\langle \lambda,f_{ij}(q) \right\rangle $, with $\lambda\in T^*_{q}M$ and $i,j\in\{0,1,\hdots, k\}$. 
\end{notation}
In this setting, we have the \emph{singular locus} $\Lambda \subseteq T^*M$ defined as follows
$$
\Lambda=\{\lambda\in T^*M : h_1(\lambda)=\hdots=h_k(\lambda)=0\},
$$
and the following proposition is an immediate corollary of the Pontryagin Maximum Principle.

\begin{proposition}
\label{776}
If an extremal $\lambda(t)$ of system (\ref{246}) does not intersect the singular locus $\Lambda$ at time  $t\in[0,t_1]$, then $\forall t\in [0,t_1]$
\begin{equation}
\label{time-opt.control}
\tilde{u}(t)=\left( \begin{array}{c}
\frac{h_1(\lambda(t))}{(h^2_1(\lambda(t))+\hdots+h^2_k(\lambda(t)))^{1/2}}\\
\vdots
\\
 \frac{h_k(\lambda(t))}{(h^2_1(\lambda(t))+\hdots+h^2_k(\lambda(t)))^{1/2}}
 \end{array}
 \right).
\end{equation}
Moreover, this extremal is a solutions of the Hamiltonian system defined by the Hamiltonian $\mathcal{H}(\lambda)=h_0(\lambda)+\sqrt{h^2_1(\lambda)+\hdots+h^2_k(\lambda)}$. Thus, it is smooth.
\end{proposition}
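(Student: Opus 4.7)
The proposition is a direct consequence of the Pontryagin Maximum Principle, so the plan is essentially to carry out the maximization in the maximality condition explicitly and then observe smoothness.

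First, I would write the control-dependent Hamiltonian of the system (\ref{246}) in terms of the notation $h_i$:
\[
\mathcal{H}_u(\lambda)=\langle\lambda,f_0(q)\rangle+\sum_{i=1}^k u_i\langle\lambda,f_i(q)\rangle=h_0(\lambda)+\sum_{i=1}^k u_ih_i(\lambda),
\]
where $q=\pi(\lambda)$. Condition (2) of PMP then requires, for a.e.\ $t$, that $\tilde u(t)\in U$ maximize the affine function $u\mapsto\sum_i u_ih_i(\lambda(t))$ on the closed unit ball. By Cauchy--Schwarz, whenever the vector $(h_1(\lambda),\ldots,h_k(\lambda))$ is nonzero, the maximum is attained at the unique point
\[
\tilde u=\frac{(h_1(\lambda),\ldots,h_k(\lambda))}{\sqrt{h_1^2(\lambda)+\cdots+h_k^2(\lambda)}},
\]
with maximal value $\sqrt{h_1^2(\lambda)+\cdots+h_k^2(\lambda)}$. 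The hypothesis $\lambda(t)\notin\Lambda$ on $[0,t_1]$ is exactly the statement that $(h_1(\lambda(t)),\ldots,h_k(\lambda(t)))\ne 0$ throughout the interval, so this formula applies at every $t\in[0,t_1]$ and yields the expression (\ref{time-opt.control}) for $\tilde u(t)$.

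Next, substituting $\tilde u(t)$ back into $\mathcal{H}_{\tilde u(t)}(\lambda(t))$ gives precisely $\mathcal{H}(\lambda)=h_0(\lambda)+\sqrt{h_1^2(\lambda)+\cdots+h_k^2(\lambda)}$. Since $\lambda(t)$ stays away from $\Lambda$, the radicand is strictly positive along the curve, so $\mathcal{H}$ is $C^\infty$ in a neighbourhood of $\lambda(t)$. Condition (1) of PMP then reads $\dot\lambda=\vec{\mathcal{H}}_{\tilde u(t)}(\lambda(t))$, and because on the set $\{(h_1,\ldots,h_k)\ne 0\}$ one has $\vec{\mathcal{H}}_{\tilde u(\lambda)}=\vec{\mathcal{H}}$ (the maximization is smooth in $\lambda$ there), the extremal is an integral curve of the single smooth Hamiltonian vector field $\vec{\mathcal{H}}$.

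Finally, smoothness of $\lambda(t)$ follows at once: it is a solution of a $C^\infty$ autonomous ODE, so by standard ODE theory it is $C^\infty$ on $[0,t_1]$. There is no real obstacle in this proof; the only point that needs a word of care is the identification of $\vec{\mathcal{H}}_{\tilde u(\lambda)}$ with $\vec{\mathcal{H}}$ off $\Lambda$, which is immediate from the envelope theorem (or a direct computation, using that the partial derivative of $\mathcal{H}_u$ with respect to $u$ vanishes at the interior maximizer $\tilde u$).
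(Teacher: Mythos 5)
Your proof is correct and is essentially the argument the paper intends: the paper offers no separate proof, presenting the proposition as an immediate corollary of the Pontryagin Maximum Principle, and your explicit maximization over the ball via Cauchy--Schwarz, followed by substitution into the Hamiltonian and smoothness of $\mathcal{H}$ off $\Lambda$, is the natural fleshing-out of that claim. One small correction to your last parenthetical: the maximizer $\tilde u$ is \emph{not} interior, it lies on the boundary sphere $\partial U$, and there $\partial_u\mathcal{H}_u=(h_1,\dots,h_k)\neq 0$, so the ``vanishing $u$-derivative'' justification does not apply. The identification $\vec{\mathcal{H}}_{\tilde u(\lambda)}=\vec{\mathcal{H}}(\lambda)$ off $\Lambda$ follows instead either from the direct computation
\[
d_\lambda\Bigl(h_0+\sqrt{h_1^2+\cdots+h_k^2}\Bigr)=d_\lambda h_0+\sum_{i=1}^k\frac{h_i}{\sqrt{h_1^2+\cdots+h_k^2}}\,d_\lambda h_i ,
\]
which is precisely $d_\lambda\mathcal{H}_u$ with $u$ held fixed at $\tilde u(\lambda)$, or from the boundary form of the envelope (Danskin) argument: since the unique maximizer $\tilde u(\lambda)$ varies on the unit sphere, its variation is orthogonal to $\tilde u(\lambda)$ and hence to $(h_1,\dots,h_k)$, so the extra chain-rule term vanishes anyway. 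With that fix the proof is complete and matches the paper.
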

We will call \emph{bang arc} any smooth arc of a time-optimal trajectory $q(t)$, whose corresponding time-optimal control $\tilde{u}$ lies in the boundary of the space of control parameters: $\tilde{u}(t)\in \partial U$. Then an arc of a time-optimal trajectory, whose extremal is out of the singular locus, is a bang arc.\\
Thus,  every time-optimal trajectory, whose extremal lies out of the singular locus, is smooth.

However, one can observe that there is a singularity on a time-optimal trajectory if the corresponding extremal touches the singular locus $\Lambda$, and the optimal control has a discontinuity.\\
We call \emph{switching} a discontinuity of an optimal control.
\begin{definition}
We denote \emph{broken extremals} those extremals that pass through the singular locus at a point $\bar\lambda\in\Lambda$ and have a singularity in $\bar\lambda$, they are going to be defined in Theorem \ref{brokex}.
\end{definition}
\begin{notation}
Given $\bar\lambda\in\Lambda$, we denote the vector $H_{0I}=\{h_{0i}(\bar\lambda)\}_{i}\in \mathbb{R}^k$ and $k\times k$ matrix $H_{IJ}=\{h_{ij}(\bar\lambda)\}_{ij}$ with respect to $\lambda\in T^*M$.
\end{notation}
By the following Theorem we will see that, given condition (\ref{condnotin}) at $\bar\lambda\in\Lambda$, there exists a unique extremal through $\bar\lambda$ with an isolated singularity in $\bar\lambda$, namely a \emph{broken extremal}, and there exists a neighbourhood of $\bar\lambda$ $O_{\bar\lambda}$, where the flow of extremals is defined.
\begin{theorem}
\label{brokex}
If it holds
\begin{equation}
\label{condnotin}
H_{0I}\notin H_{IJ}\overline{B^{k}},
\end{equation}
where $B^k=\{u\in \mathbb{R}^k\,:\,||u||<1\}$, then there exists a neighborhood $O_{\bar\lambda}\subset T^*M$ such that for any $z\in O_{\bar\lambda}$ and $\hat{t}>0$ there exists a unique contained in $O_{\bar\lambda}$ extremal $t\mapsto\lambda(t,z)$ with the condition $\lambda(\hat{t},z)=z$. Moreover, $\lambda(t,z)$ continuously depends on $(t,z)$ and every extremal in $O_{\bar{\lambda}}$ that passes through the singular locus is piece-wise smooth with only one switching.\\
Besides that, if $u$ is the control corresponding to the extremal that passes through $\bar{\lambda}$, and $\bar{t}$ is its switching time, we have:
\begin{equation}
\label{jumpi}
u(\bar{t}\pm 0)=[\pm d\,\mathrm{Id}+H_{IJ}]^{-1}H_{0I},
\end{equation}
with $d\,>0$ unique, uni vocally defined by the system and $\bar{\lambda}$, such that
\begin{equation}
\label{iii}\left\langle [d^2\, \mathrm{Id}-H_{IJ}^2]^{-1}H_{0I},H_{0I} \right\rangle=1 .
\end{equation}
\end{theorem}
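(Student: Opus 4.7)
The plan is to regularize the extremal flow in a neighbourhood of $\bar\lambda$ by a blow-up of $\Lambda$ together with a time change, reducing the problem to the hyperbolic analysis of a pair of equilibria on the exceptional divisor; the formulas (\ref{jumpi})--(\ref{iii}) will then read off directly.

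Write $r=\|H_I(\lambda)\|$ and, for $\lambda\notin\Lambda$, $v=H_I(\lambda)/r$. Along any extremal outside $\Lambda$ the maximizing control is $u=v$, and using Remarks~\ref{poisson,lie} and~\ref{derivPoiss} together with the antisymmetry of the Lie bracket one finds $\dot r=\langle H_{0I}(\lambda),v\rangle$, $r\dot v=(\mathrm{Id}-vv^T)H_{0I}(\lambda)-H_{IJ}(\lambda)v$, and $\dot\lambda=\overrightarrow{h}_0(\lambda)+\sum_i v_i\overrightarrow{h}_i(\lambda)$. Regarding $(\lambda,v,r)$ as coordinates on the blow-up of $T^*M$ along $\Lambda$ (in which $\Lambda$ is replaced by $\Lambda\times\mathbb{S}^{k-1}$) and introducing the rescaled time $d\tau=dt/r$, the system becomes smooth and autonomous in $(\lambda,v,r)$, and $\{r=0\}$ is invariant.

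On $\{r=0\}$ at $\lambda=\bar\lambda$ the $v$-flow is $dv/d\tau=(\mathrm{Id}-vv^T)H_{0I}-H_{IJ}v$; its equilibria on $\mathbb{S}^{k-1}$ are the unit vectors $v$ with $(H_{IJ}+d\,\mathrm{Id})v=H_{0I}$, $d=\langle H_{0I},v\rangle$. A short calculation using $H_{IJ}^T=-H_{IJ}$ turns $\|v\|^2=1$ into (\ref{iii}). The function $d\mapsto\langle(d^2\,\mathrm{Id}-H_{IJ}^2)^{-1}H_{0I},H_{0I}\rangle$ is strictly decreasing on $(0,\infty)$ and vanishes at $+\infty$, while its limit as $d\to 0^+$ is $\|H_{IJ}^{-1}H_{0I}\|^2$ when $H_{IJ}$ is invertible and $+\infty$ as soon as $H_{0I}$ has a nonzero component in $\ker H_{IJ}$. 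A direct case analysis shows that this limit exceeds $1$ precisely when $H_{0I}\notin H_{IJ}\overline{B^k}$, so (\ref{condnotin}) gives a unique $d>0$ solving (\ref{iii}) and, via the two signs $\pm d$, exactly two equilibria $v^{\pm}=(H_{IJ}\pm d\,\mathrm{Id})^{-1}H_{0I}$, which are the right-hand sides of (\ref{jumpi}).

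To produce the extremal through $\bar\lambda$ I would linearise the rescaled system at $(\bar\lambda,v^{\pm},0)$: because the restriction of $H_{IJ}$ to $T_{v^{\pm}}\mathbb{S}^{k-1}$ is still antisymmetric, the eigenvalues tangent to the sphere have real part $\mp d$ while the $r$-eigenvalue is $\pm d$, and the $(2n-k)$ directions tangent to $\Lambda$ are central. Normally hyperbolic invariant-manifold theory applied to the equilibrium submanifolds $\Lambda\times\{v^{\pm}\}$ then provides center-stable and center-unstable manifolds containing respectively the pre-switching and post-switching bang arcs; each $\bar\lambda'\in\Lambda\cap O_{\bar\lambda}$ is the common endpoint of exactly one arc from each side, whose concatenation gives the unique broken extremal through $\bar\lambda'$, while for $z\notin\Lambda$ one uses the smooth bang flow of Proposition~\ref{776}. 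The main obstacle I expect is the passage between rescaled and original time: the Hamiltonian field is not locally Lipschitz at $\Lambda$, but the exponential decay $r(\tau)\sim r_0 e^{\mp d\tau}$ provided by the hyperbolic splitting makes $t=\int r\,d\tau$ finite, yields continuity of $(t,z)\mapsto\lambda(t,z)$, and rules out a second switching in $O_{\bar\lambda}$, since a second switching would require a heteroclinic connection between neighbourhoods of $v^+$ and $v^-$ that is forbidden by the hyperbolicity.
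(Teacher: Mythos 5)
The paper itself does not prove Theorem \ref{brokex}; it is recalled from \cite{AB2}, so I assess your sketch against what the statement requires. The algebraic core of your reduction is correct and does recover the formulas: writing $r=\|H_I\|$, $v=H_I/r$, the antisymmetry of $H_{IJ}$ gives $\dot r=\langle H_{0I}(\lambda),v\rangle$ and the projected equation for $v$; after the rescaling $d\tau=dt/r$ the field is smooth on the blow-up (modulo the harmless redundancy of keeping $\lambda$ \emph{and} $(r,v)$ as coordinates — one should split $\lambda$ into $(H_I,w)$ with $w$ coordinates on $\Lambda$, which is possible since $f_1,\dots,f_k$ are independent). The equilibrium equation $(\pm d\,\mathrm{Id}+H_{IJ})v=H_{0I}$ with $d=|\langle H_{0I},v\rangle|$, the identity turning $\|v\|^2=1$ into (\ref{iii}), the strict monotonicity of $d\mapsto\langle(d^2\,\mathrm{Id}-H_{IJ}^2)^{-1}H_{0I},H_{0I}\rangle$ and the fact that its limit as $d\to0^+$ exceeds $1$ exactly under (\ref{condnotin}) (including the case $H_{IJ}$ singular with $H_{0I}\in\mathrm{Im}\,H_{IJ}$, where the limit is the squared norm of the minimal-norm preimage, and the case $c=0$, excluded by (\ref{condnotin})) are all right, as is the linearization: real parts $\mp d$ tangent to the sphere, $\pm d$ in the $r$-direction, center along $\Lambda$. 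This produces, for every point of $\Lambda$ near $\bar\lambda$, exactly one incoming and one outgoing bang arc with finite original time and jump (\ref{jumpi}).

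The genuine gap is in the step from ``there exist arcs converging to $v^{\pm}$'' to ``\emph{every} extremal in $O_{\bar\lambda}$ that meets $\Lambda$ is piecewise smooth with one switching and that jump.'' Take an arbitrary extremal reaching $\Lambda$ at time $\bar t$: in rescaled time $\tau\to+\infty$, its $\omega$-limit set is a compact, connected, internally chain transitive invariant set of the frozen spherical flow $v'=(\mathrm{Id}-vv^T)(H_{0I}-H_{IJ}v)$ at the limit point of $\Lambda$. Local hyperbolicity of $v^{\pm}$ rules out nothing away from these two points: for $k\geq 3$ this field is a projected rotation plus a constant term, and without a \emph{global} argument on $\mathbb{S}^{k-1}$ (a Lyapunov function, or a proof that under (\ref{condnotin}) the chain recurrent set is exactly $\{v^+,v^-\}$) the direction $H_I/\|H_I\|$ could a priori oscillate without a limit as $t\to\bar t^-$, which would destroy both the piecewise smoothness and formula (\ref{jumpi}), and with it the uniqueness of continuation and the fibration argument you use for uniqueness of the broken extremal through each point. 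This no-recurrence statement is the heart of the theorem and is where (\ref{condnotin}) must enter beyond defining $d$; it is trivial only for $k=2$ (a circle flow with two hyperbolic equilibria), while the theorem is stated for all $k$. Two smaller omissions: you never exclude extremals containing a singular arc inside $\Lambda$ (easy, but needed: along such an arc $\dot h_i\equiv0$ forces $H_{0I}(\lambda)\in H_{IJ}(\lambda)\overline{B^k}$ for some admissible control, contradicting the open condition (\ref{condnotin}) near $\bar\lambda$); and the continuity of $(t,z)\mapsto\lambda(t,z)$, in a situation where the flow is known \emph{not} to be locally Lipschitz (Remark \ref{lol8}), deserves more than the one-line appeal to exponential decay — it needs the continuous dependence of the stable/unstable fibers together with the matching of rescaled and original times made explicit.
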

\begin{remark}
\label{lol8}
 In general, the flow of switching extremals through the singular locus is not locally Lipschitz with respect to the initial value. In \cite{AB} was found a simple counterexample that can be easily generalized to any $k<n$.
\end{remark}
\section{Sufficient optimality for normal extremals}
In this section we are going to see some cases in which we prove the sufficient optimality of normal extremals through the singular locus.\\
We used a method described by Agrachev and Sachkov in their book \cite{A}. It is a geometrical elaboration of the classical fields of extremals theory, it proves optimality only for normal extremals, assuming the Hamiltonian smooth. We extended this method with constructions \emph{ad hoc}.\\

Here we are going to show the generalized method that we can apply to the broken extremal defined by the system
\begin{equation}
\label{246h}
\dot{q}=f_0(q)+\sum^k_{i=1}u_if_i(q),\quad q\in M, u\in\mathcal{U}
\end{equation}
where the space of control parameters is the $k$-dimensional closed unitary ball $U=\{u\in\mathbb{R}^k : ||u||\leq 1\}$. In this setting extremals satisfies the Hamiltonian system denoted by the non smooth Hamiltonian 
$$H(\lambda)=h_0(\lambda)+\sqrt{h^2_1(\lambda)+\hdots+h^2_k(\lambda)}.
$$
Let us start considering only normal extremal that passes through the singular locus $\Lambda$. Hence, assuming that every $\lambda(t)$ must remain in the level set $H(\lambda(t))=1$, necessarily $H(\bar\lambda)=h_0(\bar\lambda)=1$.\\

Let us denote $s$ the tautological 1-form on $T^*M$, $s_\lambda=\lambda\circ \pi_*$, and its differential is the canonical symplectic structure in $T^*M$, d$s=\sigma$.
\begin{notation}
If $F:M\rightarrow N$ is a smooth mapping, we denote $F^*:\Lambda^k N\rightarrow \Lambda^k M$ the mapping of differential forms\\
if $\omega\in\Lambda^k N$, $(F^*\omega)_q(v_1,\hdots,v_k)=\omega_{F(q)}(F_*v_1,\hdots,F_*v_k)$, $q\in M$ $v_i\in T_qM$
\end{notation}
\begin{theorem}
\label{4658m}
Let $\tilde{\lambda}(t)$ be broken extremal passing through $\bar\lambda\in\Lambda$. If it is possible to define
\begin{itemize}
\item[(1)]A co-dimension one submanifold $N$ of $M$ such that the curve $\tilde{q}(t)\in\pi(\tilde{\lambda}(t))$ passes transversally through $N$ in both sides with $\tilde{q}(\bar{t})=\bar{q}=\pi(\bar\lambda)\in N$
\item[(2)]A section $\omega$ of bundle $T^*M_{|N}$ such that 
$$\omega:q\in N\rightarrow \omega_q\in T^*M,
$$
$H(\omega_q)=1$ and $\left\langle \omega_q,f_i(q) \right\rangle=0$ with $i\in\{1,\hdots,k\}$ for all $q\in N$; moreover $\omega_{|N}$ is a well defined differentiable 1-form of $N$ and $d\omega_{|N}=0$.
\end{itemize}
Then $\tilde{q}(t)$ is time-optimal at $\bar{q}$: there exists an interval $J=(t_1,t_2)$ with $\bar{t}\in J$, such that $\tilde{q}(t)$ with $t\in J$ realizes a strict minimum time among all admissible trajectories $q(t)$ such that $q(t_1)=\tilde{q}(t_1)$ and $q(\tau)=\tilde{q}(t_2)$ with $\tau>t_1$.
\end{theorem}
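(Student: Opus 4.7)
The approach is the classical fields-of-extremals / Hilbert invariant integral construction from \cite{A}, adapted to the present non-smooth framework in which $H$ fails to be $\mathcal{C}^1$ along $\Lambda$ and the extremal flow through $\bar\lambda$ is only continuous in initial data (Remark \ref{lol8}). The section $\omega$ on the transversal hypersurface $N$ will play the role of initial Lagrangian datum, and the continuous flow of Theorem \ref{brokex} will propagate it across $\Lambda$ to produce a Lagrangian-type set $\mathcal L\subset T^*M$ that projects homeomorphically to a neighborhood $O$ of $\bar q$.

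Concretely, let $\iota:N\to T^*M$ be $\iota(q)=\omega_q$. By definition of the tautological 1-form $s$ one has $\iota^*s=\omega|_N$, hence $\iota^*\sigma=d(\omega|_N)=0$ by hypothesis~(2), so $\iota(N)$ is an isotropic $(n-1)$-submanifold of $(T^*M,\sigma)$. Define
$$\Phi:(-\varepsilon,\varepsilon)\times N\to T^*M,\qquad \Phi(s,q')=\lambda(\bar t+s,\omega_{q'}),$$
using the continuous extremal flow of Theorem \ref{brokex}. Transversality of $\tilde q$ to $N$ at $\bar q$ together with the continuous dependence furnished by Theorem \ref{brokex} makes $\pi\circ\Phi$ a homeomorphism onto an open neighborhood $O$ of $\bar q$, smooth on each of the two sides of $N$. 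Setting $\mathcal L:=\Phi((-\varepsilon,\varepsilon)\times N)$, define the section $\hat\omega_q:=\mathcal L\cap T^*_qM$; this $\hat\omega$ is continuous on $O$, smooth on $O\setminus N$, and agrees with $\omega$ on $N$.

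Next I would argue that $\hat\omega$ is closed on $O$. Away from $N$ this follows because $\mathcal L$ is the image of the isotropic $\iota(N)$ under the smooth symplectic flow of $\overrightarrow H$, hence is an isotropic $n$-submanifold, i.e.\ Lagrangian. Closedness across $N$ is then obtained from continuity of $\hat\omega$: any small loop in $O$ splits into two pieces on opposite sides of $N$, and the contributions along $N$ cancel. In particular there is a Lipschitz primitive $S$ with $dS=\hat\omega$, smooth on each side of $N$. The comparison argument is now standard: for any admissible $q(t)$ in $O$ joining $\tilde q(t_1)$ to $\tilde q(t_2)$ in time $\tau-t_1$, Cauchy--Schwarz gives
$$\langle\hat\omega_q,f_u(q)\rangle=h_0(\hat\omega_q)+\sum_{i=1}^k u_i h_i(\hat\omega_q)\le h_0(\hat\omega_q)+\sqrt{\sum_{i=1}^k h_i^2(\hat\omega_q)}=H(\hat\omega_q)=1,$$
hence $S(\tilde q(t_2))-S(\tilde q(t_1))=\int\langle\hat\omega,\dot q\rangle\,dt\le\tau-t_1$. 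Along $\tilde q$ the bound is saturated because the optimal control matches \eqref{time-opt.control}, giving equality and thus $t_2-t_1\le\tau-t_1$; strict inequality for $q(\cdot)\ne\tilde q(\cdot)$ follows from the rigidity of the equality case in Cauchy--Schwarz, which forces the control to coincide with the time-optimal one associated with $\hat\omega_{q(t)}$.

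The main obstacle lies in the second step: propagating $\omega$ across $\Lambda$ by a flow which is only continuous, not Lipschitz, in initial data (Remark \ref{lol8}) produces a section $\hat\omega$ which in the best case is merely Lipschitz, so smooth de Rham arguments are unavailable and path-independence of $\int\hat\omega$ has to be verified by an explicit gluing of the two smooth sides along $N$. The closedness of $\omega|_N$ imposed in hypothesis~(2) is precisely the compatibility condition that makes this gluing possible, matching the two Lagrangian halves into a single continuous, isotropic $\mathcal L$ and thereby into a closed $\hat\omega$ on all of $O$.
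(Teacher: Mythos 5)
Your proposal is correct and follows essentially the same route as the paper: transport the isotropic section $\mathcal N=\{\omega_q: q\in N\}\subset\Lambda\cap H^{-1}(1)$ by the broken-extremal flow of Theorem \ref{brokex}, obtain a Lipschitz (piecewise-smooth) Lagrangian-type manifold projecting homeomorphically onto a neighbourhood of $\bar q$, verify exactness of the tautological form on it (the paper does this via a Lipschitz Stokes argument reducing closed curves to $\mathcal N$, you via gluing the two smooth Lagrangian halves along $N$), and conclude with the standard $\langle\lambda,f_u\rangle\le H(\lambda)=1$ comparison integral with the same rigidity argument for strictness.
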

\begin{proof}
Given $N\subset M$ and $\omega: q\in N \rightarrow \omega_q\in T_{q}M$ with those hypothesis, let us consider 
$$\mathcal{N}=\{\omega_q\,:\,q\in N\}
$$
that is a submanifold in $T^*M$ such that $\mathcal{N}\subset\Lambda\cap H^{-1}(1)$, in particular $\bar\lambda\in \mathcal{N}$. 

From what we have proved in \cite{AB2}, if $\bar\lambda\in\Lambda$ satisfies condition (\ref{condnotin}), given $O_{\bar\lambda}$ a small enough neighbourhood of $\bar{\lambda}$, for all $\hat\lambda\in O_{\bar\lambda}\cap \Lambda$ there exists a unique broken extremal $\lambda_{\hat\lambda}(t)$ that passes through the singular locus at $\hat\lambda$. We assume $\lambda_{\hat\lambda}(\bar{t})=\hat\lambda$. Moreover, let us recall that out of $\Lambda$ each extremal satisfies the Hamiltonian system $\dot{\lambda}=\overrightarrow{H}(\lambda)$, with $H(\lambda)=h_0(\lambda)+\sqrt{h^2_1(\lambda)+\hdots+h^2_k(\lambda)}$.

Hence, we restrict $\mathcal{N}$ to those points close to $\bar\lambda$ and define the map 
$$\Phi : \mathcal{N}\times I \rightarrow T^*M
$$
where $I\subseteq \mathbb{R}$ is an interval with $\bar{t}\in I$, such that $\Phi(\hat\lambda,t)=\lambda_{\hat\lambda}(t)$.

From what we have explained in paper \cite{AB2}: given any $\hat\lambda\in \mathcal{N}$ $\lambda_{\hat\lambda}(t)$ is piece-wise smooth with respect to $t$ in the two sides where $t<\bar{t}$ or $t>\bar{t}$, and it is globally lipschitzian because the right and left limits of $\dot{\lambda}(t)$ as $t\rightarrow \bar{t}\pm 0$ are well defined.\\
Moreover, Theorem \ref{brokex} claims that at $O_{\bar{\lambda}}$ it is defined a continuous flow of extremals that is not locally Lipschitz. Nevertheless, considering just broken extremals passing through $\mathcal{N}$, the image of map $\Phi$ is a piece-wise smooth manifold composed by two smooth manifolds with boundary $\mathcal{N}$. Globally $\Phi(\mathcal{N}\times I)$ is Lipschitzian, because we have that $\frac{\partial \Phi}{\partial t}(\hat\lambda,t)_{|t\neq \bar{t}}=\overrightarrow{H}\left(\lambda_{\hat\lambda}(t))\right)$ for all $(\hat\lambda,t)\in \mathcal{N}\times \{I\setminus \{\bar{t}\}\}$ and the limits as $t\rightarrow \bar{t}\pm 0$ are explicitly defined(see \cite{AB2}).\\

Let us stress that, given $W$ a domain in $\mathcal{N}\times I$ such that $(\bar{\lambda},0)\in W$, the map
$$\pi\circ \Phi_{|W}: W \rightarrow M
$$
is a Lipschitzian (even piece-wise smooth) homeomorphism of $W$ into a domain in $M$, by construction. This is because we assume that $\tilde{q}(t)$ passes transversally through $N$ in both sides.

As a consequence, we have that $\Phi$ is a piece-wise smooth immersion since $\pi\circ \Phi_{|W}$ is immersion.\\

Now, we need to prove a technical fact: $\Phi^*s$ is an exact form.\\

It is a closed form, because
$$d(\Phi^*s)_{|(\hat\lambda,t)}=\Phi^*\sigma_{|(\hat\lambda,t)}=\sigma_{|\Phi(\hat\lambda,t)}\quad \forall (\hat\lambda,t)\in \mathcal{N}\times \mathbb{R}
$$
by the properties of the exterior derivative, and
$$\sigma_{|\Phi(\hat\lambda,t)}=\sigma_{|\hat\lambda}=ds_{|\hat\lambda}=d(\omega\circ\pi)_{|\hat\lambda}=0 \quad \forall (\hat\lambda,t)\in \mathcal{N}\times \mathbb{R}
$$
because of the properties of form $\sigma$ and by definition of $\mathcal{N}$.

On the other hand, it is exact because, given any closed curve
$$\gamma : \tau \mapsto (\lambda_0(\tau),t(\tau))\in\mathcal{N}\times\mathbb{R},
$$
one can see that
$$\int_{\gamma}\Phi^*s=0.
$$
We have 
$$\int_{\gamma}\Phi^*s=\int_{\Phi(\gamma)}s
$$
$\Phi(\gamma)$ is homeomorphic to 
$$\gamma_0 : \tau \mapsto \lambda_0(\tau)\in \mathcal{N},
$$
then, by the Lipschitzian version of Stokes Theorem \cite{L} and the definition of $\mathcal{N}$
$$\int_{\Phi(\gamma)}s=\int_{\gamma_0}s=\int_{\gamma_0}\omega\circ \pi =0.
$$
\\
\bigskip
Finally, we prove the thesis of the theorem.\\
Let us call $\mathcal{N}_{W}=\Phi(W)\subset T^*M$ such that $\pi:\mathcal{N}_{W}\rightarrow \pi(\mathcal{N}_{W})$ is a Lipschitzian (even piece-wise smooth) homeomorphism and $s_{|\mathcal{N}_{W}}$ is an exact form.\\
Given $\tilde{q}(t)=\pi(\tilde{\lambda}(t))$ with $t\in(t_1,t_2)$ such that $t_1<0<t_2$ and $\tilde{q}(0)=\bar{q}=\pi(\bar\lambda)$, let us consider $q(t)$ with $t\in (t_1,\tau)$ an admissible trajectory generated by a control $u(t)$ and contained in $\pi(\mathcal{N}_{W})$, with the boundary conditions $q(t_1)=\tilde{q}(t_1)$ and $q(\tau)=\tilde{q}(t_2)$. Then, by the map $\pi_{|\mathcal{N}_{W}}$, there exists a curve $\lambda(\cdot):t\mapsto \lambda(t)$ in $\mathcal{N}_{W}$ such that $\lambda(t_1)=\tilde{\lambda}(t_1)$ $\lambda(\tau)=\tilde{\lambda}(t_2)$ and $q(t)=\pi(\lambda(t))$ for all $t\in (t_1,\tau)$.\\
Since $\int_{\lambda(\cdot)}s=\int_{\tilde\lambda(\cdot)}s$ and $H(\lambda(t))=\max_{u\in U}\left\langle \lambda(t),f_0(q(t))+\sum^k_{i=1}u_i(t)f_i(q(t))\right\rangle =1$ we have
$$\int_{\tilde{\lambda}}s = \int^{t_1}_{0}\left\langle \tilde{\lambda}_t,\dot{\tilde{q}}(t) \right\rangle dt= \int^{t_1}_{0}\underbrace{\left\langle \tilde{\lambda}_t,f_0(\tilde{q}(t))+\sum^k_{i=1}\tilde{u}_i(t)f_i(\tilde{q}(t)) \right\rangle}_{=H(\tilde{\lambda}(t))=1} dt=t_1.
$$
On the other hand,
$$\int_{\lambda(\cdot)}s = \int^{\tau}_{0}\left\langle \lambda(t),\dot{q}(t) \right\rangle dt= \int^{\tau}_{0}\underbrace{\left\langle \lambda(t),f_0(q(t))+\sum^k_{i=1}u_i(t)f_i(q(t))  \right\rangle}_{\leq 1} dt\leq \tau
$$
Moreover, the inequality is strict if the curve $t\mapsto \lambda(t)$ is not a solution of the equation $\dot{\lambda}=\overrightarrow{H}(\lambda)$, namely is does not coincide with $\tilde{\lambda}(t)$.\\

Therefore, we have proved that $\tilde{q}(t)$ is locally time-optimal in the switching point $\bar{q}$. Actually, it is globally optimal.\\
It is optimal with respect to the whole trajectory. Indeed, it will spend strictly more time going out side the neighbourhood.
\begin{center}
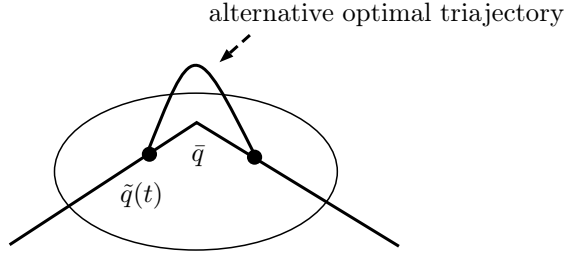

\psscalebox{1.0 1.0} 
{
\begin{pspicture}(0,-1.6276367)(7.7509837,1.6276367)
\psellipse[linecolor=black, linewidth=0.02, dimen=outer](2.4609835,-0.5776367)(1.87,1.05)
\psline[linecolor=black, linewidth=0.04](0.010983582,-1.5476367)(2.4709835,0.07236328)(5.1309834,-1.5676367)
\psbezier[linecolor=black, linewidth=0.04](1.7909836,-0.40763673)(2.4309835,1.2323632)(2.4309835,1.2523633)(3.2509835,-0.38763671875)
\psdots[linecolor=black, dotsize=0.2](1.8509836,-0.34763673)
\psdots[linecolor=black, dotsize=0.2](3.2309835,-0.38763672)
\rput[bl](2.3709836,-0.5076367){$\bar{q}$}
\rput[bl](1.4309835,-1.0676367){$\tilde{q}(t)$}
\rput[bl](2.6309836,1.3523632){alternative optimal triajectory}
\psline[linecolor=black, linewidth=0.04, linestyle=dashed, dash=0.17638889cm 0.10583334cm, arrowsize=0.05291667cm 2.0,arrowlength=1.4,arrowinset=0.0]{->}(3.1709836,1.2323632)(2.7709837,0.87236327)
\end{pspicture}
}
\captionof{figure}{Global optimality of $\tilde{q}(t)$.}
\end{center}
\end{proof}
\begin{remark}
\label{plpl}
On the other hand, if we study the problem with a smooth Hamiltonian $H$ and $\overrightarrow{H}$ complete, it is enough give a Lagrangian $\mathcal{N}$ such that
$$\mathcal{N}=\left\lbrace \mathrm{d}_q a\,|\,q\in M\right\rbrace,
$$
with $a\in \mathcal{C}^\infty(M)$ any arbitrary smooth function.\\
As a consequence $\omega=\mathrm{d}a$.
\end{remark}
Now, let us present two cases in which we found such a Lagrangian submanifold $\mathcal{N}$.\\
This method proves the optimality of those extremals that pass through $\Lambda$.
\begin{theorem}
\label{teosuf1}
Given an affine control system (\ref{246h}) with $f_1,\hdots,f_k$ analytic fields, let $\bar\lambda\in\Lambda$ be a singular point such that it holds (\ref{p265}) and $H(\bar\lambda)=1$.\\
In this setting let us consider the normal broken extremal $\tilde{\lambda}(t)$ through $\bar\lambda$, such that $\tilde{\lambda}(0)=\bar{\lambda}$.\\
We denote $\mathcal{F}=\{f_1,\hdots,f_k\}$ the family of controllable vector fields. If
$$\bar{\lambda}\perp\mathrm{Lie}_{\bar{q}}\mathcal{F}, \quad h_0(\bar{\lambda})>0$$
 and either $\mathrm{rank}\left\lbrace \mathrm{Lie}_{\bar{q}}\mathcal{F} \right\rbrace = n-1$, or $\mathrm{rank}\left\lbrace \mathrm{Lie}_{q}\mathcal{F} \right\rbrace=\mathrm{rank}\left\lbrace \mathrm{Lie}_{\bar{q}}\mathcal{F} \right\rbrace< n-1$ for all $q$ from a neighbourhood $O_{\bar{q}}$ of $\bar{q}$ in $M$, then $\tilde{q}(t)=\pi(\tilde\lambda(t))$ is locally time-optimal among all admissible trajectory in $O_{\bar{q}}$ with the same boundary conditions.
\end{theorem}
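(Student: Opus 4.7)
The plan is to apply Theorem \ref{4658m} by producing a codimension-one submanifold $N\subset M$ through $\bar q$ and a section $\omega$ of $T^*M|_N$ of the form $\omega=da$, for a smooth function $a$ defined on a neighbourhood of $\bar q$.

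The first step is to construct $N$ as an $(n-1)$-dimensional analytic submanifold through $\bar q$ to which the fields $f_1,\ldots,f_k$ are tangent and whose tangent space at $\bar q$ equals $\ker\bar\lambda$. If $\mathrm{rank}\{\mathrm{Lie}_{\bar q}\mathcal{F}\}=n-1$, I would take $N$ to be the orbit of $\mathcal{F}$ through $\bar q$: by the Nagano analytic orbit theorem it is an analytic submanifold of dimension $n-1$, the $f_i$ are automatically tangent to it, and $T_qN=\mathrm{Lie}_q\mathcal{F}$ on $N$; in particular $T_{\bar q}N=\ker\bar\lambda$, because $\bar\lambda\perp\mathrm{Lie}_{\bar q}\mathcal{F}$ and both are hyperplanes. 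If instead $\mathrm{rank}\{\mathrm{Lie}_q\mathcal{F}\}=r<n-1$ is constant in a neighbourhood of $\bar q$, then $\mathrm{Lie}\mathcal{F}$ is an involutive distribution of constant rank, Frobenius-integrable to a foliation $\mathcal{L}$, and I would take $N$ to be the saturation by leaves of an $(n-r-1)$-dimensional analytic transversal $\Sigma$ to $\mathcal{L}$ through $\bar q$ whose tangent space at $\bar q$ coincides with the kernel, in the transverse direction to $\mathcal{L}$, of the covector induced on the quotient by $\bar\lambda$. By construction $\dim N=n-1$, every $f_i(q)\in\mathrm{Lie}_q\mathcal{F}\subseteq T_qN$ for $q\in N$, and $T_{\bar q}N=\mathrm{Lie}_{\bar q}\mathcal{F}\oplus T_{\bar q}\Sigma=\ker\bar\lambda$.

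Because $\langle\bar\lambda,f_0(\bar q)\rangle=h_0(\bar\lambda)=1\neq 0$, the drift $f_0(\bar q)$ is transverse to $N$, so the broken trajectory $\tilde q(t)$ crosses $N$ transversally on both sides of the switching time: each one-sided velocity $f_0(\bar q)+\sum_i u_i(0\pm)f_i(\bar q)$ splits into a transverse $f_0(\bar q)$ and a tangent $\sum_i u_i(0\pm)f_i(\bar q)$. Next I would define $a$ by pushing $N$ along the flow of $f_0$: for $s\in N$ near $\bar q$ and small $|t|$, set $a(\phi^t_{f_0}(s)):=t$. Transversality makes $(s,t)\mapsto\phi^t_{f_0}(s)$ a local diffeomorphism onto a neighbourhood $U$ of $\bar q$, so $a\in\mathcal{C}^\infty(U)$ with $a|_N\equiv 0$ and $f_0\cdot a\equiv 1$. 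The verification for $\omega:=da$ is then routine: tangency of $f_i(q)$ to $N$ together with $a|_N=0$ gives $h_i(\omega_q)=f_i\cdot a(q)=0$ for $q\in N$, whence $\omega_q\in\Lambda$ and $H(\omega_q)=h_0(\omega_q)=f_0\cdot a(q)=1$; and $\omega|_N=d(a|_N)=0$ is trivially a closed $1$-form on $N$. Finally $\omega_{\bar q}$ and $\bar\lambda$ both annihilate $T_{\bar q}N=\ker\bar\lambda$ and evaluate to $1$ on $f_0(\bar q)$, so they coincide, and $\tilde\lambda(t)$ lies in the image of the extremal flow starting from the submanifold $\{\omega_q:q\in N\}$; Theorem \ref{4658m} then yields the local time-optimality of $\tilde q(t)$.

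The main obstacle is the construction of $N$ in the case $\mathrm{rank}\{\mathrm{Lie}_{\bar q}\mathcal{F}\}=n-1$ when the rank is not constant near $\bar q$: then $\mathrm{Lie}\mathcal{F}$ is not a regular distribution and Frobenius does not apply directly. This is precisely where the analyticity hypothesis on $f_1,\ldots,f_k$ enters, through the Nagano orbit theorem, which nonetheless yields an analytic integral submanifold of dimension $n-1$ through $\bar q$ to which all $f_i$ are tangent. Once $N$ is in hand, the rest is soft and combines a flow-box construction for $f_0$ with the general optimality principle of Theorem \ref{4658m}.
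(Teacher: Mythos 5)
Your proposal is correct and follows essentially the same route as the paper: the orbit of $\mathcal{F}$ via Nagano's theorem in the rank $n-1$ case, a Frobenius-type codimension-one extension of the leaf in the constant lower-rank case, the annihilating $1$-form normalized by $\omega(f_0)=1$, and then Theorem \ref{4658m}. Your explicit flow-box construction of the function $a$ with $a|_N=0$, $da=\omega$ is exactly what the paper records in the remark following its proof, and your check that $\omega_{\bar q}=\bar\lambda$ (so that $\tilde\lambda$ belongs to the constructed field of extremals) is a worthwhile detail the paper leaves implicit.
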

\begin{proof}
As we discussed previously, it is enough find an opportune Lagrangian submanifold $\mathcal{N}$ with the said conditions.\\
Let us consider $\mathcal{F}$ and the distribution $\mathrm{Lie}_q\mathcal{F}$, that, by definition, is closed with respect to the Lie brackets.

If $\mathrm{rank}\{\mathrm{Lie}_{\bar{q}}\mathcal{F}\}=n-1$, we will denote $N$ the orbit $\mathcal{O}_{\bar{q}}$ of distribution $\mathrm{Lie}\mathcal{F}$ at point $\bar{q}$ that is a $n-1$ dimensional submanifold of $M$ by Nagano Theorem (see \cite{A}).

Otherwise, if $\mathrm{rank}\left\lbrace \mathrm{Lie}_{q}\mathcal{F} \right\rbrace =\mathrm{rank}\left\lbrace \mathrm{Lie}_{\bar{q}}\mathcal{F} \right\rbrace =m<n-1$ $\forall q\in O_{\bar{q}}$, then, by Frobenius Theorem (see \cite{A}), it is defined a fibration in $O_{\bar{q}}$ give by the $m$ dimensional submanifold $N'$ of $M$ and other $n-m$ components. By construction, one can define the codimension $1$ submanifold $N$, such that  such that $N'\subset N$, $\mathcal{F}\subseteq T_q N$ $\forall q\in O_{\bar{q}}$ and $f_0(\bar{q})\notin T_{\bar{q}}N$, and the said curve $\tilde{q}(t)$ will cross transversally $N$ at $\bar{q}$.

Moreover, let us define $\omega$ the $1$-form that annihilates $T_q N$ such that $\omega(f_0)_{|q}=1$, for all $q\in N$. By construction, $\omega$ satisfies $d\omega_{|N}=0$, and denoting
$$\mathcal{N}=\{\omega_{|q}\,|\,q\in N\},
$$
it holds $\mathcal{N}\subseteq \Lambda \cap H^{-1}(1)$.\\
All these facts imply the thesis.
\end{proof}
\begin{remark} Let us notice that, in the setting of Theorem \ref{teosuf1}, the corresponding smooth function $a$, denoted in Remark \ref{plpl}, is such that $a_{|N}=0$ and $\mathrm{d}a=\omega$.
\end{remark}
Before presenting the next result let us define the Reeb vector field.
\begin{definition}
In a $3$-dimensional manifold $M$ let us consider a \emph{contact 1-form} $\omega\in\Lambda^1(M)$ such that $\omega\wedge d\omega\neq 0$ in never vanishing. \\
The \emph{Reeb vector field} $\xi\in\mathrm{Vec}(M)$ is the unique element of the (one-dimensional) kernel of $d\omega$ such that $\omega(\xi)=1$.
\end{definition}
\begin{theorem}
\label{teosuf2}
Given an affine control system (\ref{246h}) with $n=3$ and $k=2$ and  with $f_1,f_2$ analytic fields, let $\bar\lambda\in\Lambda$ be a singular point such that it holds (\ref{p265}) and $H(\bar\lambda)=1$.\\
In this setting let us consider the normal extremal $\tilde{\lambda}(t)$ through $\bar\lambda$, such that $\tilde{\lambda}(0)=\bar{\lambda}$.\\
If the distribution
$$\Delta=\mathrm{span}\{f_1,f_2\}
$$
is contact in $\bar{q}=\pi(\bar\lambda)$, then $\tilde{q}(t)=\pi(\tilde\lambda(t))$ is locally time-optimal among all admissible trajectory in a neighbourhood $O_{\bar{q}}$ of $\bar{q}$ with the same boundary conditions.
\end{theorem}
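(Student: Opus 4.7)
The plan is to verify the hypotheses of Theorem~\ref{4658m} by producing a suitable codimension-one submanifold $N\subset M$ through $\bar q$ together with a section $\omega$ of $T^*M|_N$; the Reeb vector field of the ambient contact structure will dictate the construction of $N$.

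First I would define, on a neighbourhood of $\bar q$, the unique smooth $1$-form $\omega$ characterised by $\omega_q\perp\Delta_q$ and $\langle\omega_q,f_0(q)\rangle=1$. This $\omega$ is well defined since $h_0(\bar\lambda)=1\neq 0$, it coincides with $\bar\lambda$ at $\bar q$, and it is a contact form on the neighbourhood because $\Delta$ is contact there. By construction $\omega_q\in\Lambda\cap H^{-1}(1)$, so the conditions $h_i(\omega_q)=0$ and $H(\omega_q)=1$ required in Theorem~\ref{4658m} are automatic.

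Let $\xi$ denote the Reeb field of $\omega$. The key linear-algebra observation in a contact $3$-manifold is that, for a $2$-plane $V\subset T_qM$, one has $d\omega|_V=0$ if and only if $\xi(q)\in V$, since $\ker d\omega=\mathbb{R}\xi$. Hence any $2$-dimensional submanifold tangent to $\xi$ everywhere automatically yields $d(\omega|_N)=0$. To construct such an $N$ with the required transversality to $\tilde q$, decompose $f_0=\xi+w$ with $w\in\Delta$ (possible because $\omega(f_0-\xi)=0$), so that
$$\dot{\tilde q}(\bar t\pm 0)=\xi(\bar q)+v_\pm,\qquad v_\pm:=w(\bar q)+u(\bar t\pm 0)\cdot f(\bar q)\in\Delta_{\bar q},$$
with $v_+\neq v_-$ by the jump formula (\ref{jumpi}). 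Pick $e\in\Delta_{\bar q}$ not parallel to $v_+$ nor to $v_-$, choose a short smooth curve $\gamma$ through $\bar q$ with $\gamma'(0)=e$, and set
$$N:=\{e^{t\xi}(\gamma(s))\,:\,|s|,|t|<\varepsilon\}.$$
Then $N$ is a $2$-dimensional submanifold through $\bar q$, tangent to $\xi$ at every one of its points, with $T_{\bar q}N=\mathrm{span}\{\xi(\bar q),e\}$ transversal to both one-sided velocities $\dot{\tilde q}(\bar t\pm 0)$.

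With these data all the hypotheses of Theorem~\ref{4658m} are satisfied and local time-optimality of $\tilde q$ follows. The main subtlety will be the degenerate case $v_\pm=0$, in which the extremal velocity would coincide with the Reeb direction at the switching instant and no choice of $e$ could yield transversality since $\xi\in T_{\bar q}N$ by construction; I expect this to be ruled out by combining condition (\ref{p265}) with the contact non-degeneracy $h_{12}(\bar\lambda)\neq 0$ and the explicit expression (\ref{jumpi})--(\ref{iii}) for $u(\bar t\pm 0)$.
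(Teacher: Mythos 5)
Your construction follows the same route as the paper's proof: the contact form $\omega$ normalized by $\omega(f_0)\equiv 1$, its Reeb field $\xi$, a surface $N$ obtained by pushing a curve through $\bar q$ with the Reeb flow, and then Theorem~\ref{4658m}. But two steps are not closed. First, choosing $e$ merely \emph{not parallel} to $v_-$ and $v_+$ gives transversality of each one-sided branch to $N$, but it does not guarantee that the broken trajectory actually passes through $N$ from one side to the other, which is what condition (1) of Theorem~\ref{4658m} requires and what its proof uses to make $\pi\circ\Phi$ a homeomorphism onto a full neighbourhood of $\bar q$. Taking a local defining function $\phi$ of $N$ with $d\phi(\xi)=d\phi(e)=0$, the incoming and outgoing branches lie on opposite sides of $N$ exactly when $d\phi(v_-)\,d\phi(v_+)>0$, i.e.\ when $v_-$ and $v_+$ lie strictly on the \emph{same} side of the line $\mathbb{R}e$ inside $\Delta_{\bar q}$; this is precisely the ``same side'' requirement the paper imposes on its curve $\hat\gamma$ (Figure 2), and it is strictly stronger than non-parallelism. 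If $v_+$ and $v_-$ were anti-parallel, no choice of $e$ would do and the field-of-extremals argument would break down, so this case must be excluded, not just the case $v_\pm=0$.

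Second, you leave the exclusion of the degenerate case as an expectation, while it must (and can, quickly) be proved, and the same computation kills the anti-parallel case. Since $\omega(f_i)\equiv 0$, $\omega(\xi)\equiv 1$ and $d\omega(\xi,\cdot)=0$, the identity $d\omega(\xi,f_i)=\xi\,\omega(f_i)-f_i\,\omega(\xi)-\omega([\xi,f_i])$ gives $\omega([\xi,f_i])=0$. Writing $f_0=\xi+w_1f_1+w_2f_2$ and pairing $[f_0,f_i]$ with $\bar\lambda=\omega_{\bar q}$ (which annihilates $f_1,f_2$) yields $h_{01}=-w_2h_{12}$ and $h_{02}=w_1h_{12}$, where $h_{12}\neq 0$ because $\Delta$ is contact at $\bar q$ and $\bar\lambda\neq 0$; hence $|w|=|H_{0I}|/|h_{12}|>1$ by condition (\ref{p265}). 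On the other hand (\ref{jumpi}) and (\ref{iii}) give $|u(\bar t\pm 0)|=1$, and in the frame $(f_1,f_2)$ one has $v_\pm=w+u(\bar t\pm 0)$. Therefore $v_\pm\neq 0$, and $v_+=-c\,v_-$ with $c>0$ would place $-w$ on the segment joining two unit vectors, forcing $|w|\le 1$, a contradiction. So $v_-$, $v_+$ are nonzero and not anti-parallel, a line $\mathbb{R}e$ leaving both strictly on one side exists, and with that corrected choice of $e$ your construction coincides with the paper's. (Incidentally, your drift-corrected vectors $v_\pm$ are the right objects for the transversality/crossing test; the paper states its ``same side'' condition for the purely controllable parts $f_\pm(\bar q)$.)
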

\begin{proof}
Since $\Delta$ is a contact distribution in a neighbourhood $O_{\bar{q}}$ of $\bar{q}$, there exists $\omega\in\Lambda^1(M)$ a 1-form such that $\Delta=\mathrm{ker}(\omega)$ and $
\omega\wedge d \omega\neq 0$, moreover we can assume $\omega_q(f_0(q))\equiv 1$ $\forall q\in O_{\bar{q}}$.\\
We can define $\xi\in\mathrm{Vec}(M)$ the Reeb field such that $\left\langle \xi \right\rangle = \mathrm{ker}(d\omega)$.\\
We construct a co-dimension $1$ submanifold $N$ in the following way.\\
Given the control $\tilde{u}$ corresponding to the extremal trajectory $\tilde{q}(t)=\pi(\tilde{\lambda}_t)$, let us denote $f_-(\bar{q})$ and $f_+(\bar{q})$ at point $\bar{q}$
$$\left\lbrace 
\begin{array}{l}
f_-(\bar{q})=\tilde{u}_1(0^-)f_1(\bar{q})+\tilde{u}_2(0^-)f_2(\bar{q})\\
f_+(\bar{q})=\tilde{u}_1(0^+)f_1(\bar{q})+\tilde{u}_2(0^+)f_2(\bar{q}).
\end{array}
\right. 
$$
Let us give any integral curve $\hat{\gamma}$ whose velocities belong to the distribution $\mathrm{span}\{f_1(q),f_2(q)\}$, with $q\in O_{\bar{q}}$, as follows such that $f_-(\bar{q})$ and $f_+(\bar{q})$ appear in the same side.
\begin{center}
\psscalebox{1.0 1.0} 
{
\begin{pspicture}(0,-2.4919086)(8.2,2.4919086)
\psline[linecolor=black, linewidth=0.04](4.2,2.4720142)(7.86,2.092014)(5.86,-2.4679859)(1.76,-1.3279859)(4.18,2.4720142)
\psbezier[linecolor=black, linewidth=0.062](3.08,-0.28798583)(4.38,-0.85581195)(5.2,1.4841881)(6.5,1.73201416015625)
\psline[linecolor=black, linewidth=0.04, arrowsize=0.05291667cm 2.0,arrowlength=1.4,arrowinset=0.0]{->}(4.88,0.53201413)(6.32,0.87201416)
\psline[linecolor=black, linewidth=0.04, arrowsize=0.05291667cm 2.0,arrowlength=1.4,arrowinset=0.0]{->}(4.92,0.53201413)(5.18,-0.6479858)
\rput[bl](0.0,1.2720141){$\mathrm{span}\{f_1(\bar{q}),f_2(\bar{q})\}$}
\rput[bl](5.78,0.27201417){$f_-(\bar{q})$}
\rput[bl](5.42,-0.42798585){$f_+(\bar{q})$}
\psline[linecolor=black, linewidth=0.04, linestyle=dotted, dotsep=0.10583334cm, arrowsize=0.05291667cm 2.0,arrowlength=1.4,arrowinset=0.0]{->}(3.9,0.19201416)(4.62,0.27201417)(4.7,-0.92798585)
\psline[linecolor=black, linewidth=0.04, linestyle=dotted, dotsep=0.10583334cm, arrowsize=0.05291667cm 2.0,arrowlength=1.4,arrowinset=0.0]{->}(3.52,0.01201416)(4.02,-0.24798584)(4.04,-1.0079858)
\psline[linecolor=black, linewidth=0.04, linestyle=dotted, dotsep=0.10583334cm, arrowsize=0.05291667cm 2.0,arrowlength=1.4,arrowinset=0.0]{->}(4.34,0.9120142)(5.36,1.0120142)(5.58,0.07201416)
\psline[linecolor=black, linewidth=0.04, linestyle=dotted, dotsep=0.10583334cm, arrowsize=0.05291667cm 2.0,arrowlength=1.4,arrowinset=0.0]{->}(4.9,1.6520141)(5.98,1.5520141)(6.06,1.1120142)
\psdots[linecolor=black, dotsize=0.16](4.92,0.49201417)
\rput[bl](4.42,0.51201415){$\bar{q}$}
\end{pspicture}
}
\captionof{figure}{Curve in $\mathrm{span}\{f_1,f_2\}$}
\end{center}
Then we apply the flow generated by the Reeb field $\xi$. We denote this surface $N$.\\
Therefore we denote $$\mathcal{N}=\{\omega_q\in T^*M\,|\,q\in N \}.$$
Let us stress the fact that we chose the curve in $\mathrm{span}\{f_1,f_2\}$, as it is described at Figure 2, because we need to assume that $\tilde{q}(t)$ passes transversally through $N$.\\
This construction implies the thesis.
\end{proof}
\begin{remark} Let us notice that, in the setting of Theorem \ref{teosuf2}, the corresponding smooth function $a$, denoted in Remark \ref{plpl}, is the time-function along the Reeb curves such that $a(\hat{\gamma})\equiv 0$.\\
Indeed, given $\gamma(t)$ a curve along the Reeb flow with $\gamma(0)\in \hat{\gamma}$, we have
$$a(\gamma(t))=\underbrace{a(\gamma(0))}_{=0}+ \int^t_0\frac{d}{dt}a(\gamma(\tau)) d\tau=\int^t_0\underbrace{\left\langle d_{\gamma(\tau)}a,\dot{\gamma}(\tau) \right\rangle}_{\left\langle \omega_{\gamma(\tau)}, \xi(\gamma(\tau))\right\rangle=1 }  d\tau=t.
$$
\end{remark}

\section[Sufficient optimality, with 2D control]{Sufficient optimality, with 2-dimensional control}
In this new section we are going to present an alternative method to prove the sufficient optimality of extremals through the singular locus defined by systems of type (\ref{246h}) with 2-dimensional control.

At first we present how we reduce the problem and then the result that we were able to gave.
\subsection{How we reduce the problem}
Let us consider a control system of type (\ref{246h}) in the n-dimensional manifold $M$ with $k=2$. We assume $\bar{\lambda}\in\Lambda$ satisfying the condition 
$$H_{0I}\notin H_{IJ}\overline{B^k},
$$
namely there exists an extremal $\lambda(t)$ that passes through $\bar\lambda$, going through the singular locus.
Let us consider the perturbation of a trajectory $q(t)=\pi(\lambda(t))$ that is the projection of the extremal.\\
With opportune rotation of the system, we may assume that $\bar\lambda=\lambda(0)$ and the trajectory $q(t)$ satisfies the following system with constant piece-wise control:
$$\left\lbrace
\begin{array}{l}
\dot{q}=f_-(q):=f_0(q)+\cos(\hat\theta)f_1(q)-\sin(\hat\theta)f_2(q), \quad t<0 \\
\dot{q}=f_+(q):=f_0(q)+\cos(\hat\theta)f_1(q)+\sin(\hat\theta)f_2(q), \quad t>0\\
q(0)=\bar{q},
\end{array}
 \right.
$$
with $\hat{\theta}\in(0,\frac{\pi}{2})$.\\
As we saw in \cite{AB} and \cite{AB2}, it is possible to give explicitly the jump $u(\bar{t}\pm 0)$ of the control at the switching by equation 
$$
u(\bar{t}\pm 0)=\frac 1{r^2}\left( -h_{02}h_{12}\pm h_{01}(r^2-h^2_{12})^{\frac 12},h_{01}h_{12}\pm  h_{02}(r^2-h^2_{12})^{\frac 12}\right).
$$
In this setting we will have at $\bar\lambda$ $h_{01}=0$, $h_{02}>0$ and $h_{12}\leq 0$, and calling $\alpha:=\frac{|h_{12}|}{|h_{02}|}$ we have
$$(\cos(\hat{\theta}),\sin(\hat{\theta}))=\left(\alpha,  \sqrt{1-\alpha^2}  \right).
$$
In order to perturb the control with admissible controls, we denote 
$$g_{v}(q):=v_1f_1(q)+v_2f_2(q),$$
where $v_1$ and $v_2$ are time depending function such that
\begin{equation}
\label{t239}\left\lbrace 
\begin{array}{l}
\left|\left|\left(\alpha+v_1(t),-\sqrt{1-\alpha^2}+v_2(t)\right)\right|\right|\leq 1, \quad t<0\\
\\
\left|\left|\left(\alpha+v_1(t),\,\sqrt{1-\alpha^2}+v_2(t)\right)\right|\right|\leq 1, \quad t>0.
\end{array}
\right. 
\end{equation}
Defining
$$a:=\left(\begin{array}{c}
\alpha\\
\sqrt{1-\alpha^2}
\end{array}\right)
$$
the condition (\ref{t239}) becomes
\begin{equation}
\label{t329}\left\lbrace 
\begin{array}{l}
\left|\left(\begin{array}{c}
v_1\\
-v_2
\end{array}\right)\right|^2\leq -2\left(\begin{array}{c}
v_1\\
-v_2
\end{array}\right)\cdot a, \quad t<0\\
\left|\left(\begin{array}{c}
v_1\\
v_2
\end{array}\right)\right|^2\leq -2\left(\begin{array}{c}
v_1\\
v_2
\end{array}\right)\cdot a, \quad t>0.\\
\end{array}
\right. 
\end{equation}
Now, let us study the behaviour of the following path in the neighbourhood $O_{\bar{q}}$ of $\bar q$ at time $t\in [-\varepsilon,\varepsilon]$, with $\varepsilon> 0$ small, using the chronological calculus described in \cite{A} Chapter 2,
$$\bar q\,\circ F_{\varepsilon}(v)=\bar q\,\circ\,e^{(-\varepsilon-0) f_{-}}\,\circ\,\overrightarrow{\mathrm{exp}}\int^0_{-\varepsilon} f_- + g_v\,\mathrm{d} t\,\circ\,\overrightarrow{\mathrm{exp}}\int^\varepsilon_{0} f_+ + g_v\,\mathrm{d} t \,\circ\,e^{(0-\varepsilon) f_+}.
$$
\begin{claim}
\label{l682}
In order to prove the optimality of the switched curve among all perturbations, it is enough to prove the following:\\
\textbf{\emph{ Statement}}: \\
There exists $\bar{\varepsilon}>0$ such that $\forall v\neq 0$ and $\forall \varepsilon<\bar\varepsilon$ the functional $F_\varepsilon(v)\neq \mathrm{Id}$.
\end{claim}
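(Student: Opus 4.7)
The plan is to unfold the functional $F_\varepsilon$ and convert the Statement into a uniqueness statement for admissible trajectories. Using the chronological product, $\bar q\circ F_\varepsilon(v)=\bar q$ is equivalent to saying that the perturbed admissible control $u_{\mathrm{ref}}+v$ drives the system from $q_1:=\bar q\circ e^{-\varepsilon f_-}$ to $q_2:=\bar q\circ e^{\varepsilon f_+}$ in time exactly $2\varepsilon$: the two terminal pieces $e^{-\varepsilon f_-}$ and $e^{-\varepsilon f_+}$ are precisely what convert the endpoint-matching condition at $q_2$ into the fixed-point condition at $\bar q$. Consequently, the Statement asserts that for $\varepsilon<\bar\varepsilon$ the reference broken trajectory is the \emph{unique} admissible trajectory of duration exactly $2\varepsilon$ joining $q_1$ and $q_2$.

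To derive local time-optimality, I would proceed by contradiction. Suppose some admissible competitor $q'$ goes from $q_1$ to $q_2$ in time $\tau\leq 2\varepsilon$, distinct from the reference. If $\tau=2\varepsilon$, its control differs from the reference by a nonzero admissible perturbation $v$ (constraint (\ref{t329}) is automatic since both controls lie in $\overline{B}^2$), and then $F_\varepsilon(v)=\mathrm{Id}$, directly contradicting the Statement. If instead $\tau<2\varepsilon$, the plan is to produce a competitor of duration \emph{exactly} $2\varepsilon$ from $q_1$ to $q_2$ by inserting a short admissible closed loop absorbing the deficit $\delta:=2\varepsilon-\tau$, which reduces to the previous case.

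The main obstacle is constructing this loop while respecting the admissibility constraint (\ref{t329}). Because the reference control is bang-bang away from the switching point, admissible perturbations are only tangent to $\partial B^2$ at first order, so one must access independent directions through higher-order (Lie-bracket) effects. I would carry out the construction at a smooth, non-switching instant of $q'$, using a Chow--Rashevsky-type loop of size $\sqrt{\delta}$ in the bracket direction $[f_1,f_2]$; such a loop is accessible within the admissible cone at second order and consumes exactly the required time $\delta$. The piecewise-smooth flow guaranteed by Theorem \ref{brokex} (through the non-degeneracy condition (\ref{condnotin})) ensures the inserted loop remains in the neighbourhood where the perturbation framework of $F_\varepsilon$ is valid, so the resulting competitor of duration $2\varepsilon$ furnishes the desired nonzero $v$ with $F_\varepsilon(v)=\mathrm{Id}$ and the contradiction closes the reduction.
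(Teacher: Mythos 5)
Your first paragraph is the correct reading of the construction: $\bar q\circ F_\varepsilon(v)=\bar q$ is exactly the condition that the perturbed admissible control steers $q_1=\bar q\circ e^{-\varepsilon f_-}$ to $q_2=\bar q\circ e^{\varepsilon f_+}$ in time exactly $2\varepsilon$, and since every admissible control on $[-\varepsilon,\varepsilon]$ is of the form ``reference plus $v$'' with $v$ satisfying (\ref{t329}), the Statement of Claim \ref{l682} is precisely the uniqueness of the equal-duration trajectory joining $q_1$ to $q_2$. Note that the paper itself states this claim without proof, so the real test of your proposal is whether it bridges the remaining logical distance from ``no equal-time competitor'' to ``time-optimal''.

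That is where your argument breaks down: the reduction of the case $\tau<2\varepsilon$ by inserting an admissible closed loop of duration $\delta=2\varepsilon-\tau$ is not available for this system. The dynamics $\dot q=f_0+u_1f_1+u_2f_2$, $\|u\|\le 1$, carry a drift that cannot be switched off, and Chow--Rashevsky-type loops in the direction $[f_1,f_2]$ belong to driftless systems. Concretely, in the normal case under consideration ($H(\bar\lambda)=h_0(\bar\lambda)=1$, $h_1(\bar\lambda)=h_2(\bar\lambda)=0$), choose any smooth $\phi$ with $d_{\bar q}\phi=\bar\lambda$; then $\frac{d}{dt}\phi(q(t))=\langle d\phi, f_0+\sum_i u_if_i\rangle\ge \tfrac12$ on a sufficiently small neighbourhood of $\bar q$ for every admissible $u$, so $\phi$ strictly increases along all admissible trajectories and no admissible closed loop of any duration exists there; in the abnormal case loops need not exist either, since they would require $f_0(\bar q)$ to lie in the open cone $\{u_1f_1(\bar q)+u_2f_2(\bar q):\|u\|<1\}$, which is not assumed. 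The bracket direction is indeed accessible at second order, but only together with the unavoidable first-order displacement along $f_0$, so your loop cannot close and the deficit $\delta$ cannot be ``absorbed''; also, Theorem \ref{brokex} describes the flow of extremals in $T^*M$ and gives no control over such an inserted arc. Consequently competitors with $\tau<2\varepsilon$ are not reduced to the equal-time case by your construction, and in fact the bare statement $F_\varepsilon(v)\neq\mathrm{Id}$ alone does not obviously exclude them: what the subsequent analysis actually provides, and what a complete justification should exploit, is the quantitative separation $\langle\bar\lambda,\tfrac1{\varepsilon^2}(F_\varepsilon(v)-\mathrm{Id})\rangle\le\tfrac12 J(v)<0$ of Claim \ref{l683}, combined with a direct accounting of the leftover time (for instance extending a faster competitor by the reference control and comparing the sign of $\langle\bar\lambda,\cdot\rangle$ at the common terminal time), in the spirit of the direct estimates with time-rescaling referred to in the introduction and carried out in \cite{BC}.
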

Now, let us study deeply this functional $F_\varepsilon(v)$.\\
Thanks to the variational formula we simplify it in such a way
$$\begin{array}{l}
\bar q\,\circ F_{\varepsilon}(v)=\bar q\,\circ\,e^{-\varepsilon f_{-}}\,\circ\,e^{\varepsilon f_{-}}\,\circ\,\overrightarrow{\mathrm{exp}}\int^0_{-\varepsilon} e^{t\,\mathrm{ad}f_-}g_{v}\,\mathrm{d} t\,\circ\,\overrightarrow{\mathrm{exp}}\int^\varepsilon_{0} e^{t\mathrm{ad}f_+}g_v\,\mathrm{d} t \,\circ\,e^{\varepsilon f_+} \,\circ\,e^{-\varepsilon f_+}\\ \\
=\bar q\,\circ\,\overrightarrow{\mathrm{exp}}\int^0_{-\varepsilon} e^{t\,\mathrm{ad}f_-}g_{v}\,\mathrm{d} t\,\circ\,\overrightarrow{\mathrm{exp}}\int^\varepsilon_{0} e^{t\mathrm{ad}f_+}g_v\,\mathrm{d} t \\
\end{array}
$$
Rescaling the time in the integrals we have
$$
\begin{array}{l}
\bar q\,\circ F_{\varepsilon}(v)=\bar q\,\circ\,\overrightarrow{\mathrm{exp}}\int^0_{-1} \varepsilon e^{\varepsilon\,t\mathrm{ad}f_-}g_{v}\,\mathrm{d} t\,\circ\,\overrightarrow{\mathrm{exp}}\int^1_{0} \varepsilon e^{\varepsilon t\,\mathrm{ad}f_+}g_v\,\mathrm{d} t.
\end{array}
$$
Hence, we can rewrite it as follows
$$F_{\varepsilon}(v)=\overrightarrow{\mathrm{exp}}\int^1_{-1}\, V_t(\varepsilon)\,\mathrm{d} t
$$
where
$$\overrightarrow{\mathrm{exp}}\int^1_{-1} \, V_t(\varepsilon)\,\mathrm{d} t=\overrightarrow{\mathrm{exp}}\int^0_{-1}\varepsilon \, g^-_{\varepsilon  t}(v)\,\mathrm{d} t\, \circ \overrightarrow{\mathrm{exp}}\int^1_{0}\varepsilon g^+_{\varepsilon t}(v)\,\mathrm{d} t\,$$
and
 $$g^-_{\varepsilon  t}(v)=e^{\varepsilon t\, \mathrm{ad} \,f_{-}}g_{v}$$
$$g^+_{\varepsilon  t}(v)=e^{\varepsilon t \, \mathrm{ad} \,f_{+}}g_{v}.$$
\begin{notation}
We will use the following notation
$$F_{\varepsilon}(v)_{|[t\rightarrow 1]}=\overrightarrow{\mathrm{exp}}\int^1_{t} \, V_\tau(\varepsilon)\,\mathrm{d} \tau
$$
and
$$F_{\varepsilon}(v)_{|[1\rightarrow t]}=\overrightarrow{\mathrm{exp}}\int^t_{1} \, V_\tau(\varepsilon)\,\mathrm{d} \tau.
$$
\end{notation}
In order to verify what we state in Claim \ref{l682}, we are going to study the Taylor expansion of $F_\varepsilon(v)$
$$F_\varepsilon(v)=\mathrm{Id}+\partial_\varepsilon F_\varepsilon (v)_{|\varepsilon=0}\,\,\varepsilon+\frac{1}{2}\partial^2_\varepsilon F_{\varepsilon}(v)_{|\varepsilon=0}\,\,\varepsilon^2+O(\varepsilon^3)
$$
then the first derivative is
$$\partial_\varepsilon F_\varepsilon(v)=F_{\varepsilon}(v)\circ \int^1_{-1}F_\varepsilon(v)_{|[t\rightarrow 1]}\circ \partial_\varepsilon V_t(\varepsilon)\circ F_\varepsilon(v)_{|[1\rightarrow t]}\mathrm{d}t
$$
and the second
$$
\begin{array}{rcl}
\partial^2_\varepsilon F_\varepsilon(v)&=&\partial_\varepsilon F_\varepsilon(v)\circ \int^1_{-1}F_\varepsilon(v)_{|[t\rightarrow 1]}\circ \partial_\varepsilon V_t(\varepsilon)\circ F_\varepsilon(v)_{|[1\rightarrow t]}\mathrm{d}t+\\ \\
&&+F_{\varepsilon}(v)\circ \int^1_{-1}\partial_\varepsilon F_\varepsilon(v)_{|[t\rightarrow 1]}\circ \partial_\varepsilon V_t(\varepsilon)\circ F_\varepsilon(v)_{|[1\rightarrow t]}\mathrm{d}t+\\ \\
&&+F_{\varepsilon}(v)\circ \int^1_{-1}F_\varepsilon(v)_{|[t\rightarrow 1]}\circ \partial^2_\varepsilon V_t(\varepsilon)\circ F_\varepsilon(v)_{|[1\rightarrow t]}\mathrm{d}t+\\ \\
&&+F_{\varepsilon}(v)\circ \int^1_{-1}F_\varepsilon(v)_{|[t\rightarrow 1]}\circ \partial_\varepsilon V_t(\varepsilon)\circ \partial_\varepsilon F_\varepsilon(v)_{|[1\rightarrow t]}\mathrm{d}t.
\end{array}
$$
Since by construction
$$\begin{array}{lcl}
F_\varepsilon(v)_{|\varepsilon=0}=\mathrm{Id}&&\partial_\varepsilon V_t(\varepsilon)_{| \varepsilon=0}=g_v
\end{array}
$$
and
$$\int^1_{-1}\partial^2_\varepsilon V_t(\varepsilon)_{| \varepsilon=0}=\int^0_{-1}2t[f_-,g_v]\mathrm{d}t+ \int^1_{0}2t[f_+,g_v]\mathrm{d}t
$$
it holds
$$\partial_\varepsilon F_\varepsilon(v)_{|\epsilon=0}=\int^1_{-1}g_v \mathrm{d}t,
$$
and
$$
\begin{array}{r}\partial^2_\varepsilon F_\varepsilon (v)_{| \varepsilon =0}=\left[ \int^0_{-1}2t[f_-,g_v]\mathrm{d}t+ \int^1_{0}2t[f_+,g_v]\mathrm{d}t \right]+\\
\\+\int^1_{-1}\int^1_t[g_{v(\theta)},g_v]\mathrm{d}\theta\mathrm{d}t+\int^1_{-1}g_v\circ \int^1_{-1}g_v
\end{array}
$$
\begin{remark}
Given the Taylor expansion
$$F_\varepsilon(v)=\mathrm{Id}+\partial_\varepsilon F_\varepsilon (v)_{|\varepsilon=0}\,\,\varepsilon+\frac{1}{2}\partial^2_\varepsilon F_{\varepsilon}(v)_{|\varepsilon=0}\,\,\varepsilon^2+O(\varepsilon^3),
$$
if $\partial_\varepsilon F_\varepsilon (v)_{|\varepsilon=0}\,\,+\frac{1}{2}\varepsilon\,\partial^2_\varepsilon F_{\varepsilon}(v)_{|\varepsilon=0}\neq 0$ then the statement of Claim \ref{l682} is proved.
\end{remark}
\bigskip
Thus, we are interested in proving if the statement of Claim \ref{l682} can be proved even in the worst case. So, let us assume that
$$\partial_\varepsilon F_\varepsilon (v)_{|\varepsilon=0}\,\,+\frac{1}{2}\varepsilon\,\partial^2_\varepsilon F_{\varepsilon}(v)_{|\varepsilon=0} =0,$$
then 
$$\partial_\varepsilon F_\varepsilon (v)_{|\varepsilon=0}=\int_{-1}^1g_{v(t)}\mathrm{d} t\in O(\varepsilon),
$$
and 
$$\int^1_{-1}g_v\circ \int^1_{-1}g_v\in O(\varepsilon^2),
$$
and finally we rewrite the functional in the following way
$$\begin{array}{rcl}\frac{1}{\varepsilon}\left(F_{\varepsilon}(v)-\mathrm{Id}\right)&=&\int_{-1}^1g_{v(t)}\mathrm{d} t +\frac{1}{2}\varepsilon\left(\int^0_{-1} 2  t[f_-,g_{v }]\mathrm{d} t+\,\int^1_{0} 2t[f_+,g_{v}]\mathrm{d} t\right. \\
&&\\
&&\left. +\int^1_{-1}\int^{1}_t [g_{v(\tau)},g_{v(t)}]\mathrm{d} \tau\,\mathrm{d} t\, \right) +O(\varepsilon^2).\\
\end{array}
$$
At this point we calculate and study the scalar product $\left\langle \bar{\lambda}, \frac{1}{\varepsilon}\left(F_{\varepsilon}(v)-\mathrm{Id}\right) \right\rangle$, with $\bar\lambda\in \Lambda$, because if we show that it is strictly negative, the statement is proven and the projection of the extremal that we are analysing is optimal.\\

Thus, we have
\begin{equation}
\label{w4750}\begin{array}{rcl}
\left\langle \bar{\lambda},\frac{1}{\varepsilon^2}\left(F_{\varepsilon}(v)-\mathrm{Id}\right)\right\rangle &=&\frac{1}{2}\left(\int^0_{-1} 2  t\left\langle \bar\lambda,[f_-,g_{v }]\right\rangle \mathrm{d} t+\,\int^1_{0} 2t\left\langle \bar\lambda,[f_+,g_{v}]\right\rangle \mathrm{d} t+\right.\\
&&\\
&&\left.+\int^1_{-1}\int^{1}_t \left\langle \bar\lambda,[g_{v(\tau)},g_{v(t)}]\right\rangle \mathrm{d} \tau\,\mathrm{d} t\, \right)  +O(\varepsilon).\\
\end{array}
\end{equation}
One can give the following estimate for $O(\varepsilon)$
$$O(\varepsilon)\leq\varepsilon\,\,\mathrm{const}\int^1_{-1}|t||v|^2dt.$$
Hence let us give the following Claim
\begin{claim}
\label{l683}
In order to prove the optimality of the switched curve among all perturbations,  denoting 
\begin{equation}
\label{993}
\begin{array}{l}
J(v)=\int^0_{-1} 2  t\left\langle \bar\lambda,[f_-,g_{v }]\right\rangle \mathrm{d} t+\,\int^1_{0} 2t\left\langle \bar\lambda,[f_+,g_{v}]\right\rangle \mathrm{d} t+\\
\\
+\int^1_{-1}\int^{1}_t \left\langle \bar\lambda,[g_{v(\tau)},g_{v(t)}]\right\rangle \mathrm{d} \tau\,\mathrm{d} t\, +\varepsilon\,\,\mathrm{const}\int^1_{-1}|t||v|^2dt.
\end{array}
\end{equation}
it is enough to prove the following:\\
\textbf{\emph{ Statement}}: \\
There exists $\bar{\varepsilon}>0$ such that $\forall v\neq 0$ and $\forall \varepsilon<\bar\varepsilon$ the following inequality holds
$$J(v)<0.
$$
\end{claim}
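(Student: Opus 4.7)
The plan is to leverage what has already been set up --- the Taylor expansion of $F_\varepsilon(v)$, the reduction in Claim \ref{l682}, and the defining property of $\bar\lambda\in\Lambda$ --- and then to absorb all higher-order contributions into the last term of (\ref{993}).

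First, I would dispose of the generic case. If the sum $\partial_\varepsilon F_\varepsilon(v)_{|\varepsilon=0}+\frac{1}{2}\varepsilon\,\partial^2_\varepsilon F_\varepsilon(v)_{|\varepsilon=0}$ does not vanish, the Taylor expansion itself forces $F_\varepsilon(v)\neq\mathrm{Id}$ for all sufficiently small $\varepsilon$, and Claim \ref{l682} applies directly. The substantive case is the worst case already identified by the authors, in which $\int_{-1}^1 g_{v(t)}\,\mathrm{d}t\in O(\varepsilon)$ and $\int_{-1}^1 g_v\circ\int_{-1}^1 g_v\in O(\varepsilon^2)$.

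Second, I would pair with $\bar\lambda$. Because $\bar\lambda\in\Lambda$ annihilates $f_1(\bar q)$ and $f_2(\bar q)$, the zeroth- and first-order coefficients of $\langle\bar\lambda,F_\varepsilon(v)-\mathrm{Id}\rangle$ in $\varepsilon$ vanish identically, and the only surviving $O(\varepsilon^2)$ contribution is exactly the sum of the three bracket integrals already written in (\ref{w4750}), plus a remainder coming from the $O(\varepsilon^3)$ terms in the chronological expansion.

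Third, I would estimate that remainder and conclude. The next-order terms in the chronological expansion of $\overrightarrow{\mathrm{exp}}\int V_t(\varepsilon)\,\mathrm{d}t$ are iterated integrals involving triple commutators of $f_\pm$ with $g_v$ (and of $g_v$ with itself). Each such commutator is uniformly bounded on a fixed neighbourhood of $\bar q$; each extra $g_v$ contributes one further factor linear in $v$, and one of the time integrations yields a factor $|t|$. Choosing the constant in (\ref{993}) large enough to majorize all such triple-bracket contributions uniformly, one obtains $\langle\bar\lambda,\frac{1}{\varepsilon^2}(F_\varepsilon(v)-\mathrm{Id})\rangle\le\frac{1}{2}J(v)$ for every $\varepsilon$ small enough; the admissibility constraint (\ref{t329}), which forces $|v|^2\le -2\,v\cdot a$, is what makes $|v|^2$ the natural gauge for these quadratic quantities. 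Then $J(v)<0$ yields a strictly negative pairing with $\bar\lambda$, hence $F_\varepsilon(v)\neq\mathrm{Id}$, and Claim \ref{l682} delivers time-optimality.

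The main obstacle is the estimate on the third-order remainder: carrying out the chronological-calculus bookkeeping honestly is what forces the specific form $\varepsilon\cdot\mathrm{const}\int_{-1}^1 |t||v|^2\,\mathrm{d}t$ and dictates the precise shape of $J(v)$ in (\ref{993}).
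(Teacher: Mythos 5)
Your proposal is correct and follows essentially the same route the paper itself takes: the paper does not give a separate proof of this Claim, but justifies it by exactly the preceding derivation — handling the generic case via the Taylor expansion and Claim \ref{l682}, pairing with $\bar\lambda\in\Lambda$ in the worst case so that only the bracket integrals of (\ref{w4750}) survive at order $\varepsilon^2$, and absorbing the higher-order chronological terms into the bound $\varepsilon\,\mathrm{const}\int_{-1}^1|t||v|^2\,dt$. Your reconstruction of that argument, including the role of the admissibility constraint (\ref{t329}) and the conclusion $J(v)<0\Rightarrow F_\varepsilon(v)\neq\mathrm{Id}$, matches the paper's reasoning.
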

\subsection{Result}
The reduction of the problem that we explained in the previous subsection, permits to show the following result.
\begin{theorem}
\label{teosuf3}
Given an affine control system of type (\ref{246h}) with any $n$ and $k=2$, and $\bar\lambda\in \Lambda$ that satisfies (\ref{p265}), if
$$\bar\lambda\perp\mathrm{span}\{f_1(\bar{q}),f_2(\bar{q}),[f_1,f_2](\bar{q})\},$$
then the projection on $M$ of the extremal through $\bar\lambda$ is time-optimal.
\end{theorem}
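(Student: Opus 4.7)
The plan is to verify the Statement of Claim \ref{l683}, i.e., to show $J(v)<0$ for every nonzero admissible $v$ once $\varepsilon$ is small enough; by the chain of reductions in Claims \ref{l682} and \ref{l683} this yields the optimality of $\tilde q(t)$. The hypothesis $\bar\lambda\perp\mathrm{span}\{f_1(\bar q),f_2(\bar q),[f_1,f_2](\bar q)\}$ is what makes the analysis tractable: combined with the standard normalization $h_{01}(\bar\lambda)=0$, $h_{02}(\bar\lambda)>0$ from the previous subsection it gives $h_1(\bar\lambda)=h_2(\bar\lambda)=h_{12}(\bar\lambda)=0$, so $\alpha=|h_{12}|/|h_{02}|=0$. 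Hence $f_-=f_0-f_2$, $f_+=f_0+f_2$, and the admissibility conditions (\ref{t329}) collapse to $v_1^2+v_2^2\leq 2v_2$ for $t<0$ and $v_1^2+v_2^2\leq -2v_2$ for $t>0$.

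With this in hand I would evaluate the three contributions to $J(v)$ in (\ref{993}) separately. The quadratic double-bracket integral vanishes identically, because $[g_{v(\tau)},g_{v(t)}]=(v_1(\tau)v_2(t)-v_2(\tau)v_1(t))[f_1,f_2]$ pairs with $\bar\lambda$ through the factor $h_{12}(\bar\lambda)=0$. Expanding $[f_\pm,g_v]$ and pairing with $\bar\lambda$, the terms proportional to $h_{01}$ and $h_{12}$ both drop out, and only the contribution $v_2 h_{02}$ survives in each case. Hence the two linear-in-$t$ terms collapse to $2h_{02}\int_{-1}^{1} t\,v_2(t)\,dt$.

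Next I would exploit the admissibility constraints, which force $v_2(t)\geq \tfrac12|v(t)|^2$ on $(-1,0)$ and $v_2(t)\leq -\tfrac12|v(t)|^2$ on $(0,1)$, hence $t v_2(t)\leq -\tfrac{|t|}{2}|v(t)|^2$ throughout $[-1,1]$. Together with the $O(\varepsilon)$ remainder already built into (\ref{993}), this gives
\[
J(v)\leq (\varepsilon C-h_{02})\int_{-1}^1|t||v(t)|^2\,dt
\]
for some constant $C$ depending only on $f_0,f_1,f_2$ in a neighbourhood of $\bar q$. Since $h_{02}>0$, this is strictly negative for $\varepsilon<h_{02}/C$, provided the integral on the right is positive. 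But this is automatic for $v\not\equiv 0$: on any set where $v_2$ vanishes the same admissibility constraint forces $v_1$ to vanish as well, so $v\not\equiv 0$ implies $\int_{-1}^1|t||v(t)|^2\,dt>0$.

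No deep obstacle is expected: the orthogonality assumption is strong enough to annihilate both $h_{12}$ and the entire quadratic $v\otimes v$ bracket term, reducing the matter to a one-dimensional convex estimate in $v_2$ driven by the quadratic admissibility inequality. Without the hypothesis $\bar\lambda\perp[f_1,f_2](\bar q)$, the double integral contributes an indefinite cross term $(v_1(\tau)v_2(t)-v_2(\tau)v_1(t))h_{12}$ that must be controlled by a substantially more delicate argument; that general (possibly abnormal) case is the one carried out in the thesis \cite{BC}.
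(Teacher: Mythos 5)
Your proposal is correct and follows essentially the same route as the paper: both verify the Statement of Claim \ref{l683} by noting that the hypothesis gives $h_{12}(\bar\lambda)=0$ (hence $\alpha=0$ and a vanishing double-bracket term), and both bound the remaining linear-in-$t$ terms via the admissibility inequality (\ref{t329}) so that $J(v)\leq(\varepsilon\,C-h_{02})\int_{-1}^1|t|\,|v|^2\,dt<0$ for small $\varepsilon$ and $v\not\equiv0$. The only difference is cosmetic: the paper carries out the bracket computations for general $\alpha$ with the reflected variables $\tilde V^\pm$ before specializing, whereas you set $\alpha=0$ from the start, which slightly streamlines the same estimate.
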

 \begin{proof}
Let us compute explicitly equation (\ref{993}), in particular it holds
$$\begin{array}{l}
\left\langle \bar{\lambda},[f_-,g_{v}]\right\rangle= -|h_{02}|\sqrt{1-\alpha^2} \left[ \left(\begin{array}{c}
v_1\\
-v_2
\end{array}\right)\cdot a \right]\\ \\
\left\langle \bar{\lambda},[f_+,g_{v}]\right\rangle= |h_{02}|\sqrt{1-\alpha^2} \left[ \left(\begin{array}{c}
v_1\\
v_2
\end{array}\right)\cdot a \right]\\
\\
\left\langle \bar{\lambda},[g_{v(\tau)},g_{v(t)}]\right\rangle=-|h_{12}|\left(v_1(\tau)v_2(t)-v_1(t)v_2(\tau) \right)
\end{array}
$$
For simplicity let us denote
$$\left\lbrace \begin{array}{ll}
V^-:=\left(\begin{array}{c}
v_1\\
-v_2
\end{array}\right)&t<0\\
&\\
V^+:=\left(\begin{array}{c}
v_1\\
v_2
\end{array}\right)&t>0,
\end{array}\right. 
$$
and (\ref{993}) becomes
$$
\begin{array}{l}
J(V^-,V^+)=2|h_{02}|\,\sqrt{1-\alpha^2}\left(\int^0_{-1}   -t\,\, V^-\cdot a\, \mathrm{d} t+\,\int^1_{0} t \,\,V^+\cdot a \,\mathrm{d} t\right)+\\
\\
+\int^1_{-1}\int^{1}_t \left\langle \bar\lambda,[g_{v(\tau)},g_{v(t)}]\right\rangle \mathrm{d} \tau\,\mathrm{d} t+\varepsilon\,\,\mathrm{const}\,\left(\int^1_{0}|t||V^+|^2dt+\int^0_{-1}|t||V^-|^2dt\right).
\end{array}
$$
Moreover, we can consider for $t>0$
$$\left\lbrace \begin{array}{l}
\tilde{V}^-(t):=V^-(-t)\\
\\
\tilde{V}^+(t):=V^+(t),
\end{array}\right. 
$$
then we have
\begin{equation}
\label{997}
\begin{array}{l}
J(\tilde{V}^+,\tilde{V}^-)=2|h_{02}|\,\sqrt{1-\alpha^2}\left(\int^1_{0}   t\,\, \tilde{V}^-\cdot a\, \mathrm{d} t+\,\int^1_{0} t \,\,\tilde{V}^+\cdot a \,\mathrm{d} t\right)+\\
\\
+\int^1_{-1}\int^{1}_t \left\langle \bar\lambda,[g_{v(\tau)},g_{v(t)}]\right\rangle \mathrm{d} \tau\,\mathrm{d} t+\varepsilon\,\,\mathrm{const}\,\left(\int^1_{0}t|\tilde{V}^+|^2dt+\int^1_{0}t|\tilde{V}^-|^2dt\right).
\end{array}
\end{equation}
From equation (\ref{t329}) we have $\left|\tilde{V}^\pm\right|^2\leq -2\tilde{V}^\pm\cdot a$, thus, assuming $\mathrm{const}=|h_{02}|$ it holds
$$\begin{array}{l}
J(\tilde{V}^+,\tilde{V}^-)\leq 2|h_{02}|\,\left(\sqrt{1-\alpha^2}-\varepsilon\right)\left(\int^1_{0}   t\,\, |\tilde{V}^-|^2\, \mathrm{d} t+\,\int^1_{0} t \,\, |\tilde{V}^+|^2 \,\mathrm{d} t\right)+\\
\\
+\int^1_{-1}\int^{1}_t \left\langle \bar\lambda,[g_{v(\tau)},g_{v(t)}]\right\rangle \mathrm{d} \tau\,\mathrm{d} t.
\end{array}
$$
Thus, we prove that it holds the statement of Claim \ref{l683}, if $[f_1,f_2](\bar{q})\in\mathrm{span}\{f_1(\bar{q}),f_2(\bar{q})\}$, indeed we will have $h_{12}=0$, $\alpha=0$ and
\begin{equation}
\label{991}
\begin{array}{l}
J(\tilde{V}^+,\tilde{V}^-)\leq -|h_{02}|\,\left(\sqrt{1-\alpha^2}-\varepsilon\right)\left(\int^1_{0}   t\,\, |\tilde{V}^-|^2\, \mathrm{d} t+\,\int^1_{0} t \,\, |\tilde{V}^+|^2 \,\mathrm{d} t\right)
\end{array}
\end{equation}
is strictly negative if the perturbation $v$ is not null.
 \end{proof}
 \section[Sufficient optimality, with n=3 and k=2]{Sufficient optimality condition with n=3 and k=2}
Finally, let us summarise sufficient optimality results for a system (\ref{246h}) when $n=3$ and $k=2$.\\

We proved the optimality of broken extremals, that passes through $\bar{\lambda}\in \Lambda$ such that $\bar{q}\in \pi(\bar\lambda)$, if
\begin{itemize}
\item $f_0\wedge f_1\wedge f_2\neq 0$ at $\bar{q}$, namely $f_0,f_1,f_2$ are linearly independent at point $\bar{q}$,
\end{itemize}
or
\begin{itemize}
\item  $f_1\wedge f_2\wedge [f_1,f_2]= 0$ at $\bar{q}$, namely those fields are linearly dependent at point $\bar{q}$.
\end{itemize}
It means that each normal extremal that projects in $O_{\bar{q}}$, a neighbourhood of $\bar{q}$ small enough, is optimal. On the other hand, if at point $\bar{q}$ the distribution $\mathrm{span}\{f_1(q),f_2(q)\}$ is not contact, any broken extremal (even abnormal) passing through $\bar\lambda$ is optimal.\\

Among all settings, it remains the case in which 
\begin{itemize}
\item $f_0\wedge f_1\wedge f_2= 0$ and $f_1\wedge f_2\wedge [f_1,f_2]\neq 0$ at point $\bar{q}$,
\end{itemize}
namely, we have a broken abnormal extremal passing through $\bar\lambda$ and the fields $f_1$ and $f_2$ generate a contact distribution at $\bar{q}$.
\bigskip

\end{document}